\newtheorem{theorem}{Theorem}[section]
\newtheorem{lemma}[theorem]{Lemma}
\newtheorem{prop}[theorem]{Proposition}
\theoremstyle{remark}\newtheorem{rem}[theorem]{Remark}
\newenvironment{proof*}{\vskip 2mm\noindent {}}{\hfill $\Box$ \vskip 2mm}
\numberwithin{equation}{section}
\newcommand{\CC}{{\mathbb{C}}}
\newcommand{\DD}{{\mathbb{D}}}
\newcommand{\GG}{{\mathbb{G}}}
\newcommand{\RR}{{\mathbb{R}}}
\newcommand{\BB}{{\mathbb{B}}}
\newcommand{\TT}{{\partial \DD}}
\newcommand{\EE}{{\mathbb{E}}}
\newcommand{\eps}{\varepsilon}
\newcommand{\la}{\lambda}
\newcommand{\id}{\operatorname{id}}
\newcommand{\Aut}{\operatorname{Aut}}
\newcommand{\va}{\varphi}
\newcommand{\oo}{\omega}
\newcommand{\be}{\beta}
\newcommand{\ga}{\gamma}
\newcommand{\de}{\delta}
\def\re{\operatorname{Re}}
\def\Re{\operatorname{Re}}
\renewcommand{\phi}{\varphi}
\begin{document}



\title[Uniqueness of left inverses]{Nevanlinna-Pick problem and uniqueness of left inverses in convex domains, symmetrized bidisc and tetrablock}

\address{Institute of Mathematics, Faculty of Mathematics and Computer Science, Jagiellonian University,
\L ojasiewicza 6, 30-348 Krak\'ow, Poland}
\thanks{The first author is partially supported by the Polish Ministry of Science and Higher Education grant Iuventus Plus IP2012 032372.
The second author is partially supported by the grant of the Polish National Science Centre no. UMO–2011/03/B/ST1/04758.}

\author{\L ukasz Kosi\'nski}

\email{{Lukasz.Kosinski}@im.uj.edu.pl}

\author{W\l odzimierz Zwonek}

\email{{Wlodzimierz.Zwonek}@im.uj.edu.pl}


\subjclass[2010]{Primary 32F17, 32F45; Secondary 47A57}


\keywords{complex geodesic, left inverse, Nevanlinna-Pick problem, strongly linearly convex domain, symmetrized bidisc, tetrablock.}

\begin{abstract}
In the paper we discuss the problem of uniqueness of left inverses (solutions of two point Nevanlinna-Pick problem)
in bounded convex domains, strongly linearly convex domains, the symmetrized bidisc and the tetrablock.
\end{abstract}

\maketitle

\section{Motivation: geodesics vs. left inverses}

The problem we are dealing with has two origins. The one is connected with the equality of the Lempert
function and the Carath\'eodory distance of two different points $w$, $z$ in a taut domain $D$
which is equivalent to the existence of holomorphic
functions: $f:\DD\to D$ and $F:D\to\DD$ such that $f$ passes through $w$ and $z$ and $F\circ f=\id_{\DD}$.
We call such an $f$ {\it a complex geodesic} and
$F$ {\it a left inverse (of $f$)}. The most general result in this direction is the Lempert Theorem on equality of the Lempert function
and the Carath\'eodory distance in the class of strongly linearly convex domains (and simultaneously the uniqueness
of the complex geodesics in that class of domains) or convex domains (in general without the uniqueness of geodesics).
The other origin is the Nevanlinna-Pick problem which
extensively studied in the case of the unit disc has also been recently
investigated in higher dimensional domains, especially in the polydisc.
In our paper we study the Nevanlinna-Pick problem for two points in more general higher dimensional (of dimension at least two) domains.

To illustrate the problem let us present two examples:
the Euclidean (two-dimensional) ball and the bidisc.
Complex geodesics in the Euclidean ball are uniquely determined whereas in the bidisc they are (generically) non-unique.
It is natural that we may study the problem
of uniqueness of left inverses. In the case of the Euclidean ball the left inverses are non-uniquely determined. Actually, for
the complex geodesic $\DD\owns\lambda\mapsto(\lambda,0)\in\BB_2$ the functions $\BB_2\owns z\mapsto\frac{z_1}{\sqrt{1+\gamma z_2^2}}\in\DD$,
where $\gamma\in\overline\DD$ are left inverses. On the other hand the non-uniquely determined geodesics for points $(0,0)$
and $(\lambda,\gamma\lambda)$ in the bidisc, where $|\gamma|<1$, determine uniquely
the left inverse ($\DD^2\owns z\mapsto z_1\in\DD$). But the points $(0,0)$ and $(\lambda,\gamma\lambda)$, $|\gamma|=1$
(with uniquely determined geodesics) have many left inverses:
for instance, $\DD^2\owns z\mapsto tz_1+(1-t)\bar\gamma z_2\in\DD$, $t\in[0,1]$.
Here we see a correspondence: uniqueness of geodesics corresponds to
non-uniqueness of left inverses. This may be rephrased as follows: a complex geodesic
$f:\DD\to\DD^2$ has a non-uniquely determined left inverse if and only if both $f_1$ and $f_2$ are automorphisms of the unit disc (this is actually mentioned in \cite{Agl-McC2003} while introducing a notion of a balanced pair).
This is the problem that is a starting one in our paper.

In most cases studied by us
there is such a correspondence. More precisely,
we show that in the class of strongly linearly convex domains any complex geodesic has many left inverses
(see Theorem~\ref{theorem:strongly}) whereas in the case of convex domains the non-uniqueness of geodesics
implies the uniqueness of left inverses (this is exactly the case in two
dimensional case,
in higher dimension there must be many more geodesics - see Theorem~\ref{theorem:uniqueness}).
Recall that strongly linearly convex domains admit uniquely determined geodesics. There are, however, other convex domains
with uniquely determined complex geodesics and yet determining uniquely left inverses (see Proposition~\ref{prop:example}).

The above discussion and the results presented show that the situation in the case of convex domains is relatively well understood
and to some extent regular. We may go, however, beyond that class of domains. Recall that there are two $\CC$-convex domains
not biholomorphic to convex ones for which the Lempert Theorem holds (i.e. the Carath\'eodory function and the Lempert function coincide).
These domains although somehow related turn out to be quite different as the uniqueness of the geodesics is concerned. The symmetrized bidisc
has uniquely determined geodesics and yet many of them (but not all!) admit unique left inverses
(see Theorem~\ref{theorem:symmetrizedbidisc-leftinverse}) whereas
a generic geodesic of the tetrablock is non-unique
 but the problem of uniqueness of left inverses is a much more complex one (see Section~\ref{section:tetrablock}).
We are able to determine the Carath\'eodory set for the tetrablock and we can settle down the problem but the philosophy which is behind that phenomenon is still a mystery to us.

Note that the study of the problem of uniqueness of left inverses in the symmetrized bidisc reduces,
in some cases, to the uniqueness of the Nevanlinna-Pick problem for three points in the bidisc.
Moreover, in our study in these two domains we rely very much upon the description of holomorphic retracts in the bidisc (see Theorem~\ref{theorem:retracts}).

Although the results presented in the paper allow us to understand the phenomenon of uniqueness of left inverses the form of left inverses in the non-uniqueness case is not understood at all. The authors have a very vague idea that a notion of a magic function (see \cite{Agl-You2008}) could be a useful tool in handling that problem.

\section{Definitions, notations and known results}
To start with let us recall basic definitions, notation, facts and results that we shall use.

For a domain $D\subset\CC^n$, $w,z\in D$ we define two holomorphically invariant functions.  {\it The Lempert function} is defined as follows
\begin{equation}
 \tilde k_D(w,z):=\inf\{p(\lambda_1,\lambda_2)\},
\end{equation}
where the infimum is taken over all $\lambda_1,\lambda_2\in\DD$ and holomorphic mappings $f:\DD\to D$ such that $f(\lambda_1)=w$, $f(\lambda_2)=z$. Here $p$ denotes the Poincar\'e distance on the unit disc $\DD$.

{\it The Carath\'eodory pseudodistance} is defined by the formula
\begin{equation}
 c_D(w,z):=\sup\{p(F(w),F(z))\},
\end{equation}
where the supremum is taken over all holomorphic functions $F:D\to\DD$. Recall that $c_D\leq\tilde k_D$. The equality
$\tilde k_D(w,z)=c_D(w,z)$ for some $w,z\in D$, $w\neq z$ is closely related to the existence of {\it complex geodesics}, i.e. holomorphic
functions $f:\DD\to D$ such that $w=f(\lambda_1^0)$, $z=f(\lambda_2^0)$ and $c_D(w,z)=p(\lambda_1^0,\lambda_2^0)$. Recall that
if $f$ is a complex geodesic then so is $f\circ a$ for any $a\in\Aut(\DD)$, moreover in this case
$c_D(f(\lambda_1),f(\lambda_2))=\tilde k_D(f(\lambda_1),f(\lambda_2))=p(\lambda_1,\lambda_2)$ for any $\lambda_1,\lambda_2\in\DD$.
Recall that in the case of $D$ being taut for any pair of different points $w,z\in D$
there is a function for which the infimum in the definition of the Lempert function is attained.
In our paper we shall consider only such domains (more precisely,  bounded convex, strongly linearly convex or $\CC$-convex).

To present the basic result on the equality of the Lempert function and the Carath\'eodory pseudodistance
recall that the smoothly bounded domain $D\subset\CC^n$ is called {\it strongly linearly convex}
if there is a smooth defining function $r$ of the domain $D$ such that
\begin{equation}
 \sum\sb{j,k=1}\sp{n}\frac{\partial^2 r}{\partial z_j\partial\bar z_k}(z)X_j\bar X_k>\Big|\sum\sb{j,k=1}\sp{n}
\frac{\partial^2 r}{\partial z_j\partial z_k}(z)X_jX_k\Big|
\end{equation}
for any $z\in\partial D$ and any non-zero vector $X$ from the complex tangent space to $\partial D$ at $z$.

\begin{theorem}\label{theorem:lempert} {\rm (see \cite{Lem1981}, \cite{Lem1984})}
Let $D$ be a bounded convex domain in $\CC^n$, $n\geq 1$ or
let $D$ be a strongly linearly convex domain in $\CC^n$, $n>1$. Then $\tilde k_D=c_D$.

Moreover, for any points $w,z\in D$, $w\neq z$ there is
a complex geodesic passing through $w$ and $z$.
In the case of strongly linearly convex domain the geodesics passing through a given pair of points are unique
(up to an automorphism of $\DD$).
\end{theorem}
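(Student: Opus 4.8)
The plan is to establish the nontrivial inequality $\tilde k_D\le c_D$ by producing, for any two distinct points $w=f(\la_1)$, $z=f(\la_2)$ lying on a Lempert extremal disc $f$, an explicit left inverse $F\colon D\to\DD$ with $F\circ f=\id_{\DD}$. Once this is done, the contraction property of $p$ yields $c_D(w,z)\ge p(F(w),F(z))=p(\la_1,\la_2)=\tilde k_D(w,z)$, and combined with the always-valid bound $c_D\le\tilde k_D$ this gives equality, exhibits $f$ as a complex geodesic, and settles the existence statement simultaneously.

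First I would reduce to the case of a smoothly bounded, strongly convex domain. A general bounded convex domain is exhausted from inside by such domains, and strongly linearly convex domains are, after a suitable perturbation of the defining function, locally of this type; a continuity and stability argument for $\tilde k_D$ and $c_D$ under these exhaustions transfers the equality back to the general case. I would then show that the extremal disc $f$, which exists by tautness, extends continuously (in fact Hölder, and smoothly in the strongly convex case) to $\overline{\DD}$ with $f(\TT)\subset\partial D$. This boundary regularity is what makes the subsequent construction possible and is, together with the next step, the technical heart of the argument.

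The decisive step is the construction of the dual (adjoint) disc. Using the outward normals to $\partial D$ along $f(\TT)$ as boundary data, I would solve a Riemann--Hilbert type boundary value problem to produce a map $\tilde f\colon\overline{\DD}\to\CC^n\setminus\{0\}$, holomorphic on $\DD$, whose boundary values on $\TT$ are positively proportional to $\la\,\overline{\nabla r(f(\la))}$; this realizes $f$ as a stationary disc. Strong convexity then guarantees that for each $z\in D$ the associated extremal problem $\la\mapsto\Re\langle z-f(\la),\tilde f(\la)/\la\rangle$ has a unique critical point in $\DD$, which defines the Lempert projection $\rho(z)\in f(\DD)$; setting $F:=f^{-1}\circ\rho$ gives the required left inverse. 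The main obstacle throughout is the interplay between the boundary regularity of $f$ and the solvability and positivity in the Riemann--Hilbert problem, since the strict inequality in the defining condition of strong convexity is exactly what forces uniqueness of the critical point and hence the well-definedness of $\rho$.

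Finally, for the uniqueness of geodesics in the strongly linearly convex case I would compare any two geodesics $f,g$ through $w$ and $z$: applying the left inverse built from one of them and invoking the strict inequality in the definition of strong linear convexity forces the boundary values, and thereby the discs themselves, to agree up to an automorphism of $\DD$.
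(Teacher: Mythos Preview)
The paper does not give its own proof of this theorem: it is stated as a known result with references to \cite{Lem1981} and \cite{Lem1984}, and no argument is supplied. There is therefore nothing in the paper to compare your proposal against.

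As an outline of Lempert's original strategy your sketch is essentially on target: the exhaustion by smooth strongly convex domains, the boundary regularity of extremal discs, the construction of the dual (stationary) disc via a Riemann--Hilbert problem, and the resulting holomorphic retraction are indeed the pillars of the proof in \cite{Lem1981}. Two remarks are worth making. First, the strongly linearly convex case in \cite{Lem1984} is not handled by a perturbation to strong convexity; Lempert works directly with strong linear convexity, and the positivity condition needed for the Riemann--Hilbert step comes straight from the strong linear convexity inequality on the complex tangent space. Second, your description of the Lempert projection $\rho$ as coming from ``a unique critical point of $\la\mapsto\Re\langle z-f(\la),\tilde f(\la)/\la\rangle$'' is not quite how the retraction is built: one rather shows that the fibres of the left inverse are the hyperplanes $\{z:\langle z-f(\la),\tilde f(\la)\rangle=0\}$, and the supporting-hyperplane property guaranteed by (strong linear) convexity is what makes $F$ map $D$ into $\DD$. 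These are refinements rather than gaps in a sketch, but if you were to flesh this out into a full proof they are the places where the real work lies.
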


In the case when $f:\DD\to D$ is a complex geodesic there is a holomorphic function $F:D\to\DD$ such that $F\circ f$
is the identity of the unit disc. Note also that in the class of taut domains the fact that a holomorphic mapping $f:\DD\to D$ is a complex geodesic is actually equivalent to the existence of a holomorphic function $F:D\to\DD$ such that $F\circ f=\id_{\DD}$. In any case we call such an $F$ {\it the left inverse to $f$}.

For basic properties of the Lempert function and the Carath\'eodory distance and notions related to them
we refer the Reader to \cite{Jar-Pfl1993}.

Recall that recently two non-trivial domains which could not be directly handled by the Lempert theory turned out to satisfy the equality of the Lempert function and the Carath\'eodory distance. These are {\it the symmetrized bidisc} $\GG_2$ defined as the image of $\DD^2$ under the symmetrization mapping $$\pi:\DD^2\owns(\lambda_1,\lambda_2)\mapsto (\lambda_1+\lambda_2,\lambda_1\lambda_2)$$ and {\it the tetrablock} $\EE$ defined as follows
\begin{equation}
 \EE:=\{x\in\CC^3:|x_1-\bar x_2x_3|+|x_2-\bar x_1x_3|+|x_3|^2<1\}.
\end{equation}
Recall that both domains are {\it $\CC$-convex} (i.e. their intersection with any complex line is
connected and simply connected), non-biholomorphically equivalent to convex domains and the Lempert function
and Carath\'eodory distance coincide on them (see \cite{Agl-You2004}, \cite{Cos2004}, \cite{Abo-Whi-You2007},
\cite{Edi-Kos-Zwo2012}, \cite{Nik-Pfl-Zwo2008}, \cite{Zwo2013}).

\bigskip

Let us fix a domain $D\subset\CC^n$. Consider the following {\it Nevanlinna-Pick problem}: given $N$ points
(called {\it nodes}) $z_1,\ldots,z_N\in D$, and $N$ complex numbers
$\lambda_1,\ldots,\lambda_N$ (called {\it targets}) decide whether there exists a holomorphic function
$F:D\to\DD$ such that $F(z_j)=\lambda_j$. In case the function $F$ is such that the supremum
norm $||F||$ is one and there is no solution of the Nevanlinna-Pick problem of the supremum norm smaller than one
the solution $F$ is called {\it extremal}. Note that in the case when $N=2$
and the equality $\tilde k_D(w,z)=c_D(w,z)=p(\lambda_1,\lambda_2)$
(with holomorphic $f:\DD\to D$ such that $f(\lambda_1)=w$, $f(\lambda_2)=z$) holds, the
(extremal) Nevanlinna-Pick problem for $w,z$ and $\lambda_1,\lambda_2$ reduces to finding the left inverse to $f$.
In our paper we shall deal with this special Nevanlinna-Pick problem. We shall study the problem of uniqueness
of the extremal solution in a wide class of domains for which the Lempert Theorem holds
(bounded convex, strongly linearly convex domains, the symmetrized bidisc and the tetrablock).
Among others we shall see that the two-point Nevanlinna-Pick problem in the symmetrized bidisc sometimes
reduces to the three point Nevanlinna-Pick problem in the bidisk.
There are many results on the Nevanlinna-Pick problem on the polydisc:
among others the existence of the solutions, the uniqueness of the extremal problem
and  the structure of the set of uniqueness is studied
(see e. g. \cite{Agl-McC2000}, \cite{Agl-McC2002}, \cite{Bal-Tre1998}, \cite{Guo-Hua-Wan2008},  \cite{Sch2011}).

Note that in the case $N=2$, for $D$ as above and for pairs $(w,z)$ for which the Carath\'eodory and Lempert function coincide
the Nevanlinna-Pick problem is actually the problem of finding the left inverses to a complex geodesic passing through $w$ and $z$.

The results of the paper are the following. In Section~\ref{section:non-uniqueness} we show that
the strongly linearly convex domains admit non-uniqueness of left inverses to all complex geodesics (Theorem~\ref{theorem:strongly}). In Section~\ref{section:uniqueness} we provide a sufficient condition for
uniqueness of left inverses under the assumption of existence of 'many' (in case $n=2$ two) left inverses passing through
a given pair of points (Theorem~\ref{theorem:uniqueness}) and we show that non-uniqueness of complex geodesics
is not necessary for uniqueness of left inverses
(see Proposition~\ref{prop:example}). In Sections~\ref{section:symmetrizedbidisc} and \ref{section:tetrablock}
we discuss the problem of the uniqueness
of left inverses in two examples of $\CC$-convex domains for which the Lempert theory does not apply directly and thus the study of the uniqueness of complex geodesics and left inverses must be very specific.
Both examples deliver unexpected phenomena and show its close connection with the Nevanlinna-Pick problem in the bidisc (see
Theorem~\ref{theorem:symmetrizedbidisc-leftinverse} and results of Section~\ref{section:tetrablock}).

\section{Non-uniqueness of left inverses in strongly linearly convex domains}\label{section:non-uniqueness}

We start our study with presenting a result on non-uniqueness of left inverses for complex geodesics
in the case of strongly linearly convex domains. The proof is based on a method of Lempert which enables us to
reduce the problem to the same problem in the Euclidean unit ball.
\begin{theorem}\label{theorem:strongly}
Assume that $D$ is smooth strongly linearly convex domain of $\CC^n$, $n>1$. Then left inverses are never uniquely defined.
\end{theorem}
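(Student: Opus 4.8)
The plan is to reduce the claim to the (already recorded) non-uniqueness of left inverses for the Euclidean ball; this transfer is exactly what Lempert's method provides. Fix a complex geodesic $f\colon\DD\to D$. By Theorem~\ref{theorem:lempert} such a geodesic exists through any prescribed pair of points, and, as recalled above, it admits a left inverse $F_0\colon D\to\DD$ with $F_0\circ f=\id_\DD$. Lempert's regularity theory moreover lets us assume that $f$ extends smoothly to $\overline\DD$ with $f(\TT)\subset\partial D$ and that $F_0$ extends smoothly to $\overline D$ with $\{|F_0|=1\}\cap\overline D=f(\TT)$. The goal is to manufacture a single holomorphic map $h=(F_0,\varphi)\colon D\to\BB_2$ whose second component $\varphi\not\equiv0$ vanishes on the geodesic, so that $h\circ f(\la)=(\la,0)$ is the standard geodesic of $\BB_2$ and $\|h\|<1$ on $D$.

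Granting such an $h$, the problem is literally transported to the ball. Recall from the introduction that the standard geodesic $\DD\owns\la\mapsto(\la,0)\in\BB_2$ carries the family of left inverses $w\mapsto w_1/\sqrt{1+\ga w_2^2}$, $\ga\in\overline\DD$. Pulling back along $h$ gives
\begin{equation}
 F_\ga:=\frac{F_0}{\sqrt{1+\ga\,\varphi^2}},\qquad\ga\in\overline\DD,
\end{equation}
with the branch of the root equal to $1$ where $\varphi=0$. Since $|\varphi|<1$ on $D$, the function $1+\ga\varphi^2$ stays in the right half-plane and $F_\ga$ is holomorphic; since $|F_0|^2<1-|\varphi|^2\le|1+\ga\varphi^2|$ on $D$, we obtain $F_\ga\colon D\to\DD$; and $\varphi\circ f\equiv0$ gives $F_\ga\circ f=\id_\DD$. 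As $\varphi\not\equiv0$, the maps $F_\ga$ are pairwise distinct, so $f$ has infinitely many left inverses, which is the assertion.

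What remains, and what constitutes the main difficulty, is the construction of $h$; here both Lempert's method and the strong linear convexity are indispensable. The transverse component $\varphi$ is produced from the components of the holomorphic map $z\mapsto z-f(F_0(z))$, which vanish exactly on $f(\DD)$ (they measure the displacement of $z$ from its Lempert projection onto the geodesic). The real work is the contraction estimate $|F_0(z)|^2+|\varphi(z)|^2<1$ on $D$. Strong linear convexity supplies a uniform supporting estimate of paraboloid type at each boundary point of the geodesic,
\begin{equation}
 \Re\langle z-f(\zeta),\widetilde f(\zeta)\rangle\le -c\,|z-f(\zeta)|^2,\qquad\zeta\in\TT,
\end{equation}
where $\widetilde f$ is Lempert's dual map (proportional to the complex normal to $\partial D$ at $f(\zeta)$) and $c>0$; it is precisely the positive definiteness of the complex Hessian recorded here that converts the mere vanishing of $\varphi$ on the geodesic into the quantitative bound. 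The delicate point is the boundary behaviour near $f(\TT)$, where both the deficiency $1-|F_0|^2$ and the transverse size $|\varphi|^2$ tend to $0$ and one must show that the former dominates the latter. This is exactly the step that breaks down for merely (weakly) convex domains such as the bidisc, consistent with the fact that there left inverses may be unique, and it is where strong convexity is used in full force. Once the estimate is in hand one could just as well incorporate all $n-1$ transverse directions and map $D$ into $\BB_n$, but a single non-trivial $\varphi$ already yields the non-uniqueness.
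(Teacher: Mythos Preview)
Your strategy coincides with the paper's: straighten the geodesic, show the straightened domain sits inside an ellipsoid, and pull back the explicit family $w_1/\sqrt{1+\gamma w_2^2}$ from the ball. The paper, however, makes the first step concrete by quoting Lempert's Proposition~11 in \cite{Lem1984}: it furnishes a biholomorphism $\Phi\colon D\to G\subset\DD\times\CC^{n-1}$ with $\Phi\circ f(\la)=(\la,0,\dots,0)$, with outer normal $(\zeta,0)$ along $T=\partial\DD\times\{0\}$, and with the additional localisation property that outside any tube $\overline\DD\times U$ the first coordinate of $G$ stays in a disc of radius $<1$. In this picture your $F_0$ is $z_1\circ\Phi$ and your transverse $\varphi$ is any $z_j\circ\Phi$, $j\ge 2$; the ad hoc choice $z-f(F_0(z))$ is unnecessary once Lempert's normal form is invoked.

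The substantive gap in your write-up is precisely the step you flag as ``the real work'': the estimate $|F_0|^2+|\varphi|^2<1$ is asserted but not established. The paper proves it as a separate lemma. Strong linear convexity along $T$ yields the local paraboloid bound $|z_1|+A\|z'\|^2<1$ for $z\in G$ near $T$; the localisation property from Lempert's construction is then what upgrades this to the global inequality $|z_1|^2+A|z_j|^2<1$ on all of $G$. Your displayed supporting inequality is only local (valid for $z$ near $f(\zeta)$), and your sketch does not explain how to control $|\varphi|$ away from the boundary torus; without the extra property of $G$ this passage from local to global is not automatic. A minor point: the estimate carries a constant $A>0$, so $\varphi$ must be rescaled before $(F_0,\varphi)$ lands in $\BB_2$; this is implicit in your phrase ``produced from the components of'' but should be said explicitly.
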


\begin{proof}
Let $f$ be a complex geodesic in $D.$ It follows from \cite{Lem1984} (Proposition 11) that there exist a smooth domain $G\subset \CC^n$ and a biholomorphism $\Phi: D\to G$ such that
\begin{itemize}
\item $\Phi(D)=G$;
\item $g(\zeta):=\Phi(f(\zeta))=(\zeta,0,\ldots,0),\ \zeta\in\overline \DD$;
\item $\nu_G(g(\zeta))=(\zeta,0,\ldots,0),\ \zeta\in\partial\DD$, where $\nu_G$ denotes the outer unit vector to $\partial G$;
\item for any $\zeta\in\partial\DD$, the point $g(\zeta)$ is a point of the strong linear convexity of $G$.
\end{itemize} Moreover, it follows immediately from Lempert's construction that $G$ may be chosen so that
\begin{itemize}
 \item $G\subset \DD\times \CC^{n-1}$;
 \item for any neighborhood $U\subset \CC^{n-1}$ of $0$ there is a $\delta<1$ such that $G\setminus (\overline\DD \times U)$ is contained in $\delta \DD \times \CC^{n-1}$.
\end{itemize}

Now we shall make use of the following simple observation.
\begin{lemma}\label{lem: elips}
 Let $G$ be a domain contained in $\DD\times \CC^{n-1}$ such that $T:= \partial\DD\times \{0\}$
is contained in $\overline G.$ Assume moreover that the boundary of $G$ is smooth and strongly linearly convex in a neighborhood of $T$.

 Then there is a constant $A>0$ such that $|z_1|+A||z'||^2<1$ whenever $z=(z_1,z')\in G$ is close to $T$.
\end{lemma}

Postponing the proof of the lemma stated above observe that applying it and making use of properties of $G$ we see that there is an $A>0$ such that \begin{equation}\label{eq: A}|z_1|^2+A|z_j|^2\leq |z_1|+A||z'||^2<1\quad \text{for}\ z=(z_1,z')\in G,\; j=2,\ldots,n.\end{equation} Then $G_{A,j}(z):=\frac{z_1}{\sqrt{1-A z_j^2}}$ is a well defined function on $G$ attaining its values in the unit disc, where $j=2,\ldots,n.$ It is clear that $G_{A,j}\circ \Phi$ are left inverses to $f$ provided that $A$ is small enough (or $A=0$).
\end{proof}

\begin{proof}[Proof of Lemma~\ref{lem: elips}]
 Let $r$ be a smooth defining function for $\partial G$ in a neighborhood of $T$.
 The strong linear convexity of $\partial G$ implies that a function $z'\mapsto r(z_0,z')$
is strongly convex in a neighborhood of $0$ for any $z_0\in \TT$ (of course $\{z_0\}\times \mathbb C^{n-1}$
is a complex affine space tangent to $\partial G$ at $(z_0,0)$).
 Therefore, there is an open neighborhood $U\subset \CC^{n-1}$ of $0$
and there is an $\alpha>0$ such that \begin{equation}\label{eq: c1} r(z_0,z')\geq \alpha ||z'||^2\quad \text{for any}\quad (z_0,z') \in \TT
\times U.\end{equation} On the other hand, it follows from the smoothness of $r$ that
\begin{equation}\label{eq: c2} r(z_1,0)>C_1 (|z_1|-1)\quad \text{for any $z_1\in\DD$ sufficiently close to $\partial\DD$},
\end{equation}
for some uniform constant $C_1>0.$

 For $z_1\in \DD$, $z_1\neq 0,$ let $\tilde z(z_1):=z_1/|z_1|.$ 
Making use of the inequalities \eqref{eq: c1} and \eqref{eq: c2} one may see that some simple analysis gives constants $\alpha,\beta>0$
such that $$r(z_1,z')\geq -\alpha |z_1-\tilde z(z_1)|+\beta ||z'||^2$$ for any $z_1\in \DD$ close to $\partial\DD$
and $z'$ close to $0.$
From this we immediately deduce that  there are positive constants $A$ and $C$ such that $$r(z)\geq C(|z_1|+A ||z'||^2-1),$$
whenever $z\in G$ is sufficiently close to $T.$ This easily implies the assertion.
\end{proof}

\section{Uniqueness of left inverses in convex domains}\label{section:uniqueness}

\begin{theorem}\label{theorem:uniqueness}
Let $D$ be a convex domain in $\CC^n$ ($n\geq 2$) and let $f^1,\ldots,f^{n}:\DD\to D$ be complex geodesics
such that $f^1(0)=\ldots=f^{n}(0)=w$ and $f^1(\sigma)=\ldots=f^{n}(\sigma)=z$
for some $\sigma\in\DD\setminus\{0\}$. Assume additionally
that for some $\lambda\in\DD$ the system of vectors
\begin{equation}\label{eq:independence}
\{f^1(\lambda)-f^n(\lambda),\ldots,f^{n-1}(\lambda)-f^n(\lambda)\}
\end{equation}
is linearly independent
(equivalently, the convex hull of the set $\{f^1(\lambda),\ldots,f^n(\lambda)\}$ is $(n-1)$-dimensional).
Then there is only one left inverse passing through points $w$ and $z$, i.e. a holomorphic function
$F:D\to\DD$ such that $F\circ f=\id_{\DD}$ for some (equivalently, any) complex geodesic $f:\DD\to D$ such that
$f(0)=w$, $f(\sigma)=z$.
\end{theorem}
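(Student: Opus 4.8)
The plan is to work entirely in the interior of $D$, using convexity to manufacture an $(n-1)$-parameter family of geodesics sharing a single left inverse, and then to complexify the parameters so that the identity theorem applies. First I would settle the phrase ``some (equivalently, any)''. If $F\circ f=\id_\DD$ for one geodesic $f$ with $f(0)=w$, $f(\sigma)=z$, then $F(w)=0$ and $F(z)=\sigma$; conversely, if $F:D\to\DD$ is holomorphic with $F(w)=0$, $F(z)=\sigma$ and $g:\DD\to D$ is any geodesic with $g(0)=w$, $g(\sigma)=z$, then $F\circ g$ is a holomorphic self-map of $\DD$ fixing the two distinct points $0$ and $\sigma$; by the Schwarz--Pick lemma it preserves $p(0,\sigma)$, hence is an automorphism, and fixing two points it is $\id_\DD$. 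Thus the set $\mathcal L$ of left inverses is exactly $\{F:D\to\DD \text{ holomorphic}:F(w)=0,\ F(z)=\sigma\}$, and every $F\in\mathcal L$ is simultaneously a left inverse of all the $f^j$.

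Next I would produce the family. For $t$ in the closed standard simplex $\Delta=\{t\in[0,1]^n:\sum_j t_j=1\}$ put $g_t:=\sum_j t_j f^j$. By convexity $g_t(\DD)\subset D$, and $F\circ g_t=\sum_j t_j\,(F\circ f^j)=\id_\DD$ for every $F\in\mathcal L$, so each $g_t$ is again a geodesic through $w,z$ with the same left inverses. Evaluating at $\lambda\in\DD$ gives $F\bigl(\sum_j t_j f^j(\lambda)\bigr)=\lambda$, i.e. $F\equiv\lambda$ on the simplex $S_\lambda:=\mathrm{conv}\{f^1(\lambda),\dots,f^n(\lambda)\}$ for every $F\in\mathcal L$. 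The crucial idea is now to complexify the $t$-variables: the map $\Psi(t,\lambda):=\sum_j t_j f^j(\lambda)$ is holomorphic on $\{t\in\CC^n:\sum_j t_j=1\}\times\DD$, and $\Delta$ is an open piece of a maximal totally real subspace of the hyperplane $\{\sum_j t_j=1\}$. Since $F\circ\Psi(\cdot,\lambda)\equiv\lambda$ on $\Delta$, the identity principle forces $F\circ\Psi=\lambda$ for complex $t$ near $\Delta$ as well (wherever $\Psi$ stays in $D$).

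Finally I would invoke the independence hypothesis to open things up. Differentiating $F\circ g_t=\id_\DD$ shows that, at a point $q=\Psi(t_0,\lambda_0)$ with $\lambda_0$ the distinguished parameter, the gradient $F'(q)$ annihilates each $f^i(\lambda_0)-f^n(\lambda_0)$ while pairing to $1$ with the velocity $\sum_j t_{0,j}(f^j)'(\lambda_0)$; in particular that velocity is transverse to $W:=\mathrm{span}_\CC\{f^i(\lambda_0)-f^n(\lambda_0):i<n\}$. The $t$-derivatives of $\Psi$ span $W$ and the $\lambda$-derivative is the velocity, so once $W$ has complex dimension $n-1$ the differential $d\Psi$ has rank $n$; then $\Psi$ is a local biholomorphism onto an open set $\Omega\subset D$ on which $F=\mathrm{pr}_2\circ\Psi^{-1}$ is pinned down by the geodesics alone. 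Hence any two members of $\mathcal L$ coincide on $\Omega$, and, $D$ being connected, the identity theorem yields $\#\mathcal L=1$.

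I expect the main obstacle to be precisely the rank computation, namely guaranteeing that $d\Psi$ attains rank $n$. This is where the hypothesis is spent: the independence of the $n-1$ difference vectors is what forces $\dim_\CC W=n-1$, while the normalization $F'(q)\cdot(\text{velocity})=1$ supplies the missing transverse direction. For $n=2$ real and complex linear independence coincide and the step is immediate; the delicate point in higher dimension is to secure complex rank $n-1$ from the stated spread of the $n$ geodesics, which is exactly why a full collection of $n$ geodesics with an $(n-1)$-dimensional convex hull is assumed. A secondary item to verify is that $g_t(\DD)\subset D$ and that the complexified $\Psi$ remains inside $D$ on a neighbourhood of $\Delta\times\{\lambda_0\}$, so that $F\circ\Psi$ is defined there; both follow from convexity with a little care.
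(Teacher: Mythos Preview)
Your approach is essentially the paper's: form convex combinations $g_t=\sum_j t_jf^j$ of the given geodesics, observe that they are again geodesics through $w$ and $z$ with the same left inverse, complexify the parameter $t$ via the identity principle, and use the resulting open image to pin $F$ down. The paper uses the coordinates $(\lambda_1,\dots,\lambda_{n-1})$ in place of your barycentric $t$ and, instead of your rank computation for $d\Psi$, shows that the parametrization $\Phi$ is \emph{injective} (applying $F$ recovers $\lambda$, and the linear-independence hypothesis then recovers the remaining coordinates); an injective holomorphic map between equidimensional domains has open image, so the two routes to an open set are equivalent.

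One line needs repair. You write $F\circ g_t=\sum_j t_j\,(F\circ f^j)$, but $F$ is not linear, so this equality is false. The conclusion $F\circ g_t=\id_\DD$ is nevertheless correct, and you already proved it in your first paragraph: $g_t(0)=w$, $g_t(\sigma)=z$, hence $F\circ g_t$ is a holomorphic self-map of $\DD$ fixing the two distinct points $0$ and $\sigma$, and Schwarz--Pick forces it to be $\id_\DD$. Replace the false linearity claim by a reference back to that argument, and the proof goes through.
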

\begin{proof}
Let $\emptyset\neq U\subset\subset\DD$ be such that for any $\lambda\in \overline U$ the property (\ref{eq:independence}) is satisfied.
For $\lambda\in\overline U$ define
\begin{multline}
g_{\lambda}(\lambda_1,\ldots,\lambda_{n-1}):=\\
\lambda_1f^1(\lambda)+\ldots+\lambda_{n-1}f^{n-1}(\lambda)+
(1-\lambda_1-\ldots-\lambda_{n-1})f^n(\lambda)=\\
f^n(\lambda)+\lambda_1(f^1(\lambda)-f^n(\lambda))+\ldots+
\lambda_{n-1}(f^{n-1}(\lambda)-f^n(\lambda)).
\end{multline}
Put $T:=\{t=(t_1,\ldots,t_{n-1}):0\leq t_j\leq 1,t_1+\ldots+t_{n-1}\leq 1\})\subset \CC^{n-1}$. Convexity of $D$ implies that $g_{\lambda}(T)\subset D$, $\lambda\in\overline U$.
Let $T\subset\Omega\subset\CC^{n-1}$ be an open connected set such that $g_{\lambda}(\Omega)\subset D$, $\lambda\in\overline U$.
We easily see that  for any
$(t_1,\ldots,t_{n-1})\in T$ the function $\DD\owns \lambda\mapsto g_{\lambda}(t_1,\ldots,t_{n-1})\in D$ is a complex geodesic in $D$ for $(w,z)=(g_{0}(t_1,\ldots,t_{n-1}),g_{\sigma}(t_1,\ldots,t_{n-1}))$. Let $F:D\to\DD$ be any
left inverse passing through $w$ and $z$. Then,
$F(g_{\lambda}(t_1,\ldots,t_{n-1}))=\lambda$, $\lambda\in\DD$, $(t_1,\ldots,t_{n-1})\in T$.
The identity principle for holomorphic functions
implies that
\begin{equation}\label{eq:left-inverse}
F(g_{\lambda}(\lambda_1,\ldots,\lambda_{n-1}))=\lambda,\; \lambda\in\overline U,\;(\lambda_1,\ldots,\lambda_{n-1})\in\Omega.
\end{equation}
Note that the mapping
\begin{equation}
\Phi:U\times\Omega\owns(\lambda,\lambda_1,\ldots,\lambda_{n-1})\mapsto g_{\lambda}(\lambda_1,\ldots,\lambda_{n-1})\in
D\end{equation}
is holomorphic. We claim that the mapping $\Phi$ is injective. In fact, the equality $\Phi(\mu,\mu_1,\ldots,\mu_{n-1}) =
\Phi(\la,\la_1,\ldots,\la_{n-1})$ implies, in view of (\ref{eq:left-inverse}), that $\lambda=\mu$. Consequently,
\begin{multline}
\lambda_1(f^1(\lambda)-f^n(\lambda))+\ldots+
\lambda_{n-1}(f^{n-1}(\lambda)-f^n(\lambda))=\\
\mu_1(f^1(\la)-f^n(\la))+\ldots+
\mu_{n-1}(f^{n-1}(\la)-f^n(\la)).
\end{multline}
Now the linear independence (property (\ref{eq:independence})) shows that $\la_j=\mu_j$, $j=1,\ldots,n-1$.

Consequently, $\Phi(U\times\Omega)$ is a non-empty open subset of $D$ on which the left inverse $F$ is uniquely determined.
The identity principle for holomorphic functions finishes the proof.
\end{proof}

\begin{rem}
 Note that the assumption of Theorem~\ref{theorem:uniqueness} in the case $n=2$
means that there are two different complex geodesics
passing through $w$ and $z$. In higher dimension we need the existence of 'many more'
complex geodesics which would guarantee the uniqueness of left inverses.
\end{rem}

\begin{rem}
The proof  Theorem~\ref{theorem:uniqueness} relies very much upon the convexity of $D$. It is natural to ask the question whether the same result holds without that assumption.
\end{rem}

The above result suggests that the uniqueness of geodesics may imply the non-uniqueness of left inverses. As we shall see
from the proposition below this is in general not the case.

\begin{prop}\label{prop:example}
 Let $D$ be a pseudoconvex complete Reinhardt domain in $\CC^2$.
Assume that the point $p_0:=(1,0)$ lies in the topological boundary of $D$ and that the boundary of $D$
is $\mathcal C^{\infty}$ smooth in a neighborhood of $p_0$. Let $f$ denote a complex geodesic $\DD\owns\la\mapsto (\la,0)\in D$.

Then $f$ has a unique left inverse in $D$ if and only if the boundary of $D$ is of infinite type at the point $p_0$.
\end{prop}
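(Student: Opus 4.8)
The plan is to use the complete Reinhardt structure to reduce the whole question to the Taylor coefficients of a putative left inverse and to the local shape of $\partial D$ at $p_0$. First I would fix normalizations. Since $D$ is complete Reinhardt, every $F\in\OO(D,\DD)$ has a globally convergent expansion $F(z)=\sum_{j,k\ge0}c_{jk}z_1^jz_2^k$, and $F(\la,0)=\la$ forces $c_{j0}=\delta_{1j}$, so $F(z)=z_1+\sum_{k\ge1}z_2^kg_k(z_1)$ with $g_k\in\OO(\DD)$. Near $p_0$ the Reinhardt symmetry, smoothness and pseudoconvexity make $D$ of the form $\{\abs{z_1}^2<a(\abs{z_2}^2)\}$, where $a$ is smooth, $a(0)=1$, $a$ is non-increasing, and $v\mapsto\tfrac12\log a(e^{2v})$ is concave; the point $p_0$ is of type $2m$ exactly when $a(s)-1$ vanishes to order $m$ at $s=0$ (infinite type meaning $a-1$ is flat), and concavity forces the leading coefficient $c_m$ of $a-1$ to be negative. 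Completeness also gives that the slice radius $R(z_2)=\sqrt{a(\abs{z_2}^2)}$ is $\le1$, so $\abs{z_1}<1$ on $D$ and $z_1$ is always a left inverse; the whole issue is whether it is the only one.

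For the uniqueness half (infinite type $\Rightarrow$ unique) I would argue slice by slice. Fixing $z_1=\la$ with $\abs{\la}<1$, the map $t\mapsto F(\la,t)$ sends the disc $\{\abs{t}<r(\la)\}$, with $r(\la):=\sqrt{a^{-1}(\abs{\la}^2)}$, into $\DD$ and takes the value $\la$ at the origin. The case $k=1$ of Schwarz--Pick already gives $g_1\equiv0$, so one proceeds inductively: if $g_1=\dots=g_{k-1}=0$ then $t\mapsto F(\la,t)-\la$ vanishes to order $k$, and the higher--order Schwarz lemma (applied to the Blaschke factor $\tfrac{F-\la}{1-\bar\la F}$) yields $\abs{g_k(\la)}\le(1-\abs{\la}^2)/r(\la)^k=(1-\abs{\la}^2)/a^{-1}(\abs{\la}^2)^{k/2}$. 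Writing $s=a^{-1}(\abs{\la}^2)$, so $1-\abs{\la}^2=1-a(s)$, infinite type makes this quotient tend to $0$ as $\abs{\la}\to1$ for every fixed $k$, because flatness gives $1-a(s)=o(s^{k/2})$. Hence each $g_k$ is locally bounded with vanishing radial boundary values, so by the maximum principle $g_k\equiv0$ and $F=z_1$.

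For the non-uniqueness half (finite type $\Rightarrow$ non-unique) the same estimate locates the obstruction precisely at order $2m$: $g_k\equiv0$ is forced for $k<2m$, while at $k=2m$ the quotient $(1-a(s))/s^{m}\to\abs{c_m}\ne0$ no longer forces vanishing. I would then exhibit a genuine second left inverse realizing a nonzero $g_{2m}$. The natural model is the egg (complex ellipsoid) $\{\abs{z_1}^2+\abs{z_2}^{2m}<1\}$, for which $z_1\big/\sqrt{1-\ga z_2^{2m}}$, $\ga\in\overline\DD$, are left inverses; in general one looks for $F=z_1H$ with $H(z_1,0)=1$ and reduces $\abs{F}<1$ on $D$ to the weighted bound $\abs{H}^2\le1/a(\abs{z_2}^2)$. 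Finite type supplies the room near $T=\TT\times\{0\}$, since there $1/a(\abs{z_2}^2)=1+\abs{c_m}\abs{z_2}^{2m}+\dots>1$.

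The step I expect to be the main obstacle is the global construction in the finite--type case. The weighted inequality $\abs{H}^2\le1/a(\abs{z_2}^2)$ must hold on all of $D$, which may be unbounded in the $z_2$--direction (e.g.\ $\{\abs{z_1}^2(1+\abs{z_2}^2)<1\}$), so a naive guess such as $z_1\exp(\ga z_2^{2m})$ or a polynomial perturbation can blow up or overshoot the weight far from $p_0$. Moreover the second left inverse need not have the separated form $z_1H(z_2)$ at all: for $\{\abs{z_1}^2(1+\abs{z_2}^2)<1\}$ the correct perturbation is $z_1\big/\sqrt{1-\ga z_1^2z_2^2}$, with genuine $z_1$--dependence in $g_{2m}$. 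Controlling the weight everywhere therefore forces one to use the full strength of pseudoconvexity, namely the concavity of $v\mapsto\tfrac12\log a(e^{2v})$, either to dominate the chosen perturbation globally on $D$ or to reduce $D$ to the egg--type model; carrying this out uniformly, rather than merely near $p_0$, is the delicate point.
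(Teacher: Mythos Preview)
Your argument for the infinite-type direction is correct and takes a different route from the paper. You run an inductive Schwarz--Pick estimate on the vertical slices $t\mapsto F(\la,t)$: once $g_1=\dots=g_{k-1}=0$, the higher-order Schwarz lemma on $\{|t|<r(\la)\}$ gives $|g_k(\la)|\le(1-|\la|^2)/r(\la)^k$, and flatness of $a-1$ drives this bound to zero at $\partial\DD$ for every $k$, so the maximum principle kills each $g_k$. The paper instead (writing $(z,w)$ for the coordinates) splits $F=z+zwP_1+w^kQ$ and treats the two pieces separately: a scaling $\la\mapsto F(\la^k z,\la w)\la^{-k}$, followed by $\la\to 0$, shows that $(z,w)\mapsto z+Q(0,0)w^k$ maps $D$ into $\overline\DD$, hence $D\subset\{|z|+|Q(0,0)||w|^k\le 1\}$, which is incompatible with infinite type, so $Q\equiv0$; then for $F=z(1+w^kP)$ with $P(\cdot,0)\not\equiv0$ it picks $z_0$ with $\re P(z_0,0)>0$, uses the mean-value property of harmonic functions to transport a lower bound $\re P\ge A$ to some rotate of each $z$ with $|z|$ near $1$, and deduces $|F|\ge|z|(1+A|w|^k)$ along a boundary approach, again contradicting infinite type. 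Your slice argument is shorter and handles all coefficients by one uniform mechanism; the paper's is more hands-on but avoids inverting $a$.

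For the finite-type direction the paper supplies exactly the step you flag as missing. It asserts that the local defining inequality $|z|+a|w|^k<1$ near $p_0$ upgrades, ``using basic properties of pseudoconvex Reinhardt domains'', to a \emph{global} inclusion $D\subset\{|z|+b|w|^k<1\}$ for some $b>0$, after which $F_\be(z,w)=z/(1-\be w^k)$, $0\le\be<b$, is a one-parameter family of left inverses. For bounded $D$ this upgrade is elementary from completeness alone (if $|w|\ge\eps_0$ then $(z,\eps_0 w/|w|)\in D$ forces $|z|<1-\tfrac{a}{2}\eps_0^k$ by the local shape, and $|w|\le M$ lets one shrink $b$), so your fear that one must invoke ``the full strength of pseudoconvexity'' is unwarranted in that case. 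Your concern about the unbounded case is, however, legitimate: for $D=\{|z_1|^2(1+|z_2|^2)<1\}$ no such inclusion holds and the paper's $F_\be$ even has poles in $D$, so a construction adapted to the global weight (such as your $z_1/\sqrt{1-\ga z_1^2z_2^2}$) is genuinely needed there. In short, the gap in your finite-type argument is smaller than you think for bounded $D$, and your caution about unbounded $D$ catches a point the paper glosses over.
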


\begin{rem}
It follows from \cite{Din1989} that in the class of bounded strictly convex domains (i.e. bounded convex domains admitting no segments in the boundary) complex goedesics are always unique. As we know from
Theorem~\ref{theorem:strongly} in the case of strong convexity
left inverses are never uniquely determined. This will not be the case if the assumption of strong convexity
were relaxed with a geometric assumption of strict convexity.
Actually, using Proposition~\ref{prop:example} we are able to construct a strictly convex
and smooth Reinhardt domain such that the only left inverse to $\DD\owns \la\mapsto (\la, 0)\in D$
is the projection onto the first variable
(take for example a smooth Reinhardt domain being additionally strictly convex
whose defining function is $|z|+e^{-|w|^{-2}}-1$ in a neighborhood of $p_0$).

\end{rem}

\begin{proof}
Assume first that $\partial D$ is of finite type at $p_0$. Let $\rho$ be a defining function of $\partial D$ in a neighborhood of $p_0.$ Write $z=t e^{i\theta},$ $w=s e^{i\eta}.$ We may always assume that $$\rho(z,w)=\rho(t,s)= t+ r(s) -1$$ for $(t,s)$ in a neighborhood of $(1,0)$, where $r(s)= a s^k + O(s^{k+1})$, $a>0$, $k\in \mathbb N$. Therefore, using basic properties of pseudoconvex Reinhardt domains we find that there is $b>0$ such $D\subset \{(z,w)\in \CC^2:\ |z|+ b |w|^k<1\}.$ Then, $$F_\be(z,w)=\frac{z}{1- \be w^k},\quad (z,w)\in D$$ is a left inverse of $f$ for any $0\leq \be< b$.

Now assume that $\partial D$ is of infinite type at $p_0$ and take any left inverse $F:D\to \DD$ of $f$.
Expanding $F$ in a series and making use of the equality $F\circ f=\id_{\DD}$
we get that $$F(z,w)=z+zwP_1(z,w)+w^kQ(z,w),$$ for some positive integer $k$, holomorphic mappings $P_1$ and $Q$
on $D$ such that either $Q\equiv 0$ or $Q(0,0)\neq 0$. Observe that the second possibility cannot occur.
Actually, suppose that $Q(0,0)\neq 0.$ Then, consider the function
$\DD\setminus\{0\}\owns\la \mapsto F(\lambda ^k z, \lambda w) \lambda^{-k}$, which extends to a well defined holomorphic function
on $\DD$ with values in $\overline \DD$.
Letting $\la\to 0$ we get that the mapping $(z,w)\mapsto z+ Q(0,0) w$ maps $D$ into the closed unit disc
which gives the inclusion $D\subset\{|z|+|Q(0,0)||w|<1\}$ which contradicts the smoothness of the domain.

Therefore, $$F(z,w)=z(1+w P_1(z,w)).$$ Our aim is to show that $P_1\equiv 0.$ Clearly, this would imply the assertion of the proposition.

Assuming the contrary take $P$ and a positive integer $k$ such that $P_1(z,w)=w^{k-1}P(z,w)$
and $P(\cdot,0)\not\equiv 0$. Take any $z_0\in 2^{-1}\mathbb D$ such that $P(z_0,0)\neq 0$.
Let $\eta\in \RR$ be such that $|P(z_0,0)|=e^{i\eta} P(z_0,0)$.
Considering $e^{i\eta/k} w$ instead of $w$ we may assume that $\eta=0.$
Let $A>0$ be such that $\re P(z_0,w)\geq A$ for $w$ in a neighborhood of $0$,
say $|w|<\eps,$ for some $\eps >0.$ Certainly $\re P(z_0,w)w^k\geq A w^k$ whenever $0\leq w<\eps.$

Making use of the properties of harmonic functions one can see that for any $2^{-1}<|z|<1$ and $|w|<\eps$
there is $\theta=\theta(z,w) \in \RR$ such that $\re P(z e^{i\theta}, w)\geq A$

Thus, for $z\in \DD$ arbitrarily close to the unit circle
and $0\leq w<\eps$ we have the following sequence of inequalities:
\begin{multline}\label{mult}
|F(e^{i\theta(z,w)} z, w)|=|z||1+w^k P(e^{i\theta(z,w)}z,w)|\geq\\ |z| (1+Re (w^k P(e^{i\theta(z,w)}z,w))
\geq |z|(1+ Aw^k).\end{multline}

Making use of \eqref{mult} and the assumptions on $F$ we infer that the inequality $1\geq |z|(1+A|w|^k)$ holds for $(z,w)\in \overline D$
such that $|z|$ is close to $1$ ans $|w|$ is small. This gives us the contradiction.

\end{proof}

\section{Left inverses in the symmetrized bidisc (Nevanlinna-Pick problem for two points
in the symmetrized bidisc)}\label{section:symmetrizedbidisc}

We start this section with recalling a result on a complete description of holomorphic retracts of the diagonal in
the bidisc that will be an important tool in the proofs of uniqueness parts of left inverses in both domains:
the symmetrized bidisc and the tetrablock. The description is a direct consequence of the following result
that we present below in a form that we could apply directly in our paper.

\begin{theorem}\label{theorem:retracts} {\rm (see Example 11.79 in \cite{Agl-McC2002})}
Let $F:\DD\times\DD\to\DD$ be a holomorphic function such that $F(\la,\la)=\la$, $\la\in\DD$. Then there is
an $h$ lying in the closed unit ball of $H^{\infty}(\DD^2)$ and $t\in[0,1]$ such that
\begin{equation}
 F(z_1,z_2)=\frac{tz_1+(1-t)z_2-z_1z_2h(z_1,z_2)}{1-((1-t)z_1+tz_2)h(z_1,z_2)},\; z_1,z_2\in\DD.
\end{equation}
Conversely, any function defined as above maps $\DD^2$ to $\DD$ and satisfies the equality $F(\la,\la)=\la$,
$\la\in\DD$.
\end{theorem}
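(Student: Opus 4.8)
The plan is to treat the two implications separately: the converse is a direct verification, while the forward implication carries the genuine content. For the converse, I would fix $t\in[0,1]$ and $h$ in the closed unit ball of $H^{\infty}(\DD^2)$, and set $p:=tz_1+(1-t)z_2$, $q:=(1-t)z_1+tz_2$, so that $F=(p-z_1z_2h)/(1-qh)$. Putting $z_1=z_2=\la$ collapses $p,q$ to $\la$ and gives $F(\la,\la)=\la(1-\la h)/(1-\la h)=\la$, so the diagonal condition is automatic. For the mapping property I would fix $(z_1,z_2)\in\DD^2$ and read the formula as the M\"obius transformation $\eta\mapsto(p-z_1z_2\eta)/(1-q\eta)$ in the single variable $\eta=h(z_1,z_2)\in\overline{\DD}$. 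Since $q$ is a convex combination of $z_1,z_2\in\DD$ we have $|q|<1$, so the pole $1/q$ lies outside $\overline{\DD}$ and the map is holomorphic on $\overline{\DD}$; by the maximum principle it suffices to check $|p-z_1z_2\eta|\le|1-q\eta|$ on $|\eta|=1$. Squaring and minimizing over the unit circle reduces this to the single inequality
\[
1+|q|^2-|p|^2-|z_1z_2|^2\ge 2\,|q-\bar p\,z_1z_2|,
\]
which, after writing $u:=1-|z_1|^2$ and $v:=1-|z_2|^2$, becomes elementary and follows from $1-|z_j|\ge\tfrac12(1-|z_j|^2)$. As $F$ is nonconstant, the maximum principle then upgrades $|F|\le1$ to $|F|<1$.

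For the forward implication I would first record an algebraic normal form. Since $F(\la,\la)=\la$, the function $F-z_2$ vanishes on $\{z_1=z_2\}$, so Hadamard's lemma (in the coordinates $z_1-z_2,\,z_2$) produces a holomorphic $a$ on $\DD^2$ with $F=z_2+(z_1-z_2)a=az_1+(1-a)z_2$; differentiating $F(\la,\la)=\la$ gives $a(\la,\la)=\partial_1F(\la,\la)$ and $\partial_1F(\la,\la)+\partial_2F(\la,\la)=1$. Comparing with the target formula, where one computes $a=t(1-z_2h)/(1-qh)$ and hence $a(\la,\la)=t$, the whole problem reduces to two claims: \emph{(i)} the diagonal restriction $a(\la,\la)$ is a constant $t\in[0,1]$; and \emph{(ii)} the resulting candidate $h=(a-t)/\bigl((1-t)\,a\,z_1-t\,(1-a)\,z_2\bigr)$ lies in the closed unit ball of $H^{\infty}(\DD^2)$.

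Claim \emph{(i)} is where the two-variable Schur condition is indispensable: on individual slices $z_2=\mathrm{const}$ the Schwarz--Pick lemma yields only $|\partial_1F(\la,\la)|\le1$ and $|\partial_2F(\la,\la)|\le1$ with sum $1$, which does not even force these numbers to be real. I would therefore invoke the Agler decomposition, available because on $\DD^2$ the Schur class coincides with the Schur--Agler class (And\^o's inequality): there are holomorphic maps $u,v$ into Hilbert spaces with
\[
1-\overline{F(w)}F(z)=(1-\overline{w_1}z_1)\langle u(z),u(w)\rangle+(1-\overline{w_2}z_2)\langle v(z),v(w)\rangle .
\]
Restricting to $z=(z,z)$, $w=(w,w)$ and using $F(z,z)=z$, the factor $1-\overline{w}z$ cancels and leaves $\langle u(z,z),u(w,w)\rangle+\langle v(z,z),v(w,w)\rangle=1$. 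Thus two positive semidefinite kernels on $\DD$ sum to the constant kernel $1$, and since a positive kernel dominated by the constant kernel is itself constant, both are constant; equivalently $u(z,z)\equiv u_0$ and $v(z,z)\equiv v_0$ with $\|u_0\|^2=t\in[0,1]$. Feeding $w=(w,w)$ back into the full decomposition and matching the coefficients of $1$ and of $\overline{w}$ (both sides being affine in $\overline{w}$) gives $a=\langle u(\cdot),u_0\rangle$ and $1-a=\langle v(\cdot),v_0\rangle$; evaluating on the diagonal yields $a(\la,\la)=\|u_0\|^2=t$, which settles \emph{(i)}.

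The step I expect to be the main obstacle is claim \emph{(ii)}: assembling $h$ and proving $\|h\|_{\infty}\le1$, together with checking that the displayed quotient, which is $0/0$ along the diagonal, is genuinely holomorphic there. Rather than estimating $h$ by hand---where slice-wise Schwarz--Pick is again insufficient---I would run a one-step Schur reduction on the unitary colligation underlying the Agler decomposition: having split off the rank-one pieces $u_0,v_0$ that carry $a$, the components of $u,v$ orthogonal to $u_0,v_0$ assemble into the transfer function of a reduced colligation, which is automatically a Schur function of the bidisc and whose linear-fractional transfer formula reproduces $F$ with parameter $t$. Identifying this transfer function with the candidate $h$ above makes its membership in the closed unit ball automatic; substituting $t\in[0,1]$ and $h$ back into the normal form then recovers the stated representation and completes the forward implication.
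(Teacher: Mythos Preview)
The paper does not prove this theorem at all: it is quoted verbatim from Agler--McCarthy (Example~11.79 in \cite{Agl-McC2002}) and used as a black box throughout Sections~5 and~6. So there is no ``paper's own proof'' to compare against; what you have written is, in outline, precisely the argument one finds in the cited reference.

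Your converse verification is correct (the reduction to $2tu(1-|z_2|)+2(1-t)v(1-|z_1|)\ge uv$ with $u=1-|z_1|^2$, $v=1-|z_2|^2$ goes through as you indicate). For the forward implication, your use of the Agler/And\^o decomposition to force $\langle u(z,z),u(w,w)\rangle+\langle v(z,z),v(w,w)\rangle\equiv 1$, and hence $u,v$ constant on the diagonal with $a(\la,\la)=\|u_0\|^2=:t\in[0,1]$, is exactly the key step in Agler--McCarthy's treatment; the subsequent ``one-step Schur reduction on the underlying unitary colligation'' that produces $h$ as a transfer function (hence automatically in the closed unit ball of $H^\infty(\DD^2)$) is also their mechanism. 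Your sketch of step~(ii) is admittedly the least detailed part, but since the paper simply cites the book for the whole statement, you are already supplying more than the paper does, and the plan you describe is the standard one.
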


Recall that $\tilde k_{\GG_2}=c_{\GG_2}$ (see \cite{Agl-You2004}, \cite{Cos2004}). Consequently,
there are complex geodesics passing through arbitrary pair of points.
Below we deal with the uniqueness of the left inverse to complex geodesics in the symmetrized bidisc.

To solve this problem it is sufficient to consider two cases that are presented in the theorem giving a complete description of
complex geodesics in the symmetrized bidisc that we give below (see \cite{Pfl-Zwo2005}). Recall that in \cite{Agl-You2006} another description of complex geodesics in the symmetrized bidisc was given.

\begin{theorem}\label{theorem:symmetrizedbidisc-geodesics} {\rm (see \cite{Pfl-Zwo2005}).}
A holomorphic mapping $f:\DD\to\GG_2$ is a complex geodesic if and only if it is
(up to an automorphism of the unit disc and automorphism of the symmetrized bidisc) of one of the following two forms:
\begin{equation}\label{blaschke}
f(\lambda)=\pi(B(\sqrt{\lambda}),B(-\sqrt{\lambda})),
\end{equation}
 $\lambda\in\DD$, where $B$ is a non-constant Blaschke
product of degree one or two with $B(0)=0$;

\begin{equation}\label{automorphism}
 f(\lambda)=\pi(\lambda,a(\lambda)),
\end{equation} $\lambda\in \DD$,
 where $a$ is an automorphism of $\DD$ such that $a(\lambda)\neq\lambda$, $\lambda\in\DD$.

Moreover, the complex geodesics in the symmetrized bidisc are unique (up to automorphisms of the unit disc).
\end{theorem}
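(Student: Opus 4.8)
The plan is to reduce everything to the one-parameter family of rational ``magic functions'' $\Phi_\omega(s,p) = \frac{2\omega p - s}{2 - \omega s}$, $\omega \in \overline{\DD}$, which realize the Carath\'eodory distance of $\GG_2$: by the Agler--Young theory one has $c_{\GG_2}(w,z) = \max_{\omega \in \TT} p(\Phi_\omega(w), \Phi_\omega(z))$, the maximum being attained at some $\omega_0 \in \TT$. Since $\tilde k_{\GG_2} = c_{\GG_2}$, a holomorphic $f: \DD \to \GG_2$ is a complex geodesic precisely when there is an extremal $\omega_0 \in \TT$ for which $\Phi_{\omega_0} \circ f$ is an automorphism of $\DD$, this $\Phi_{\omega_0}$ being a left inverse to $f$ up to an automorphism of $\DD$. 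First I would fix such an $f = (s,p)$, choose the extremal $\omega_0$, and normalize. The automorphism group of $\GG_2$ is induced by $\Aut(\DD)$ acting diagonally on $\DD^2$, so composing $f$ with an automorphism of $\GG_2$ (to rotate $\omega_0$ to $1$) and with an automorphism of $\DD$ (to arrange $\Phi_1 \circ f = \id_{\DD}$) one may assume $\omega_0 = 1$ and $\Phi_1(f(\lambda)) = \lambda$, which yields the single scalar relation $2p - s = \lambda(2 - s)$ expressing $p$ affinely in terms of $s$ and $\lambda$.

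Next I would lift $f$ through the symmetrization $\pi$. The components of the lift are the two roots $\tilde f_{1,2} = \frac{1}{2}(s \pm \sqrt{\Delta})$ of $t^2 - s t + p = 0$, where $\Delta = s^2 - 4p$ is the discriminant, and the analysis splits according to whether $\Delta$ vanishes in $\DD$. If $\Delta$ has no zero, then $\sqrt{\Delta}$ is single-valued on the simply connected $\DD$, the lift $\tilde f = (\tilde f_1, \tilde f_2): \DD \to \DD^2$ is single-valued, and the contraction inequalities $p(0,\sigma) = \tilde k_{\GG_2}(f(0), f(\sigma)) \le \tilde k_{\DD^2}(\tilde f(0), \tilde f(\sigma)) \le p(0,\sigma)$ force $\tilde f$ to be a complex geodesic of $\DD^2$; by the bidisc classification one component, say $\tilde f_1$, is an automorphism, and normalizing $\tilde f_1 = \id$ gives $f = \pi(\lambda, a(\lambda))$ with $a := \tilde f_2$, the condition $\Delta \neq 0$ reading $a(\lambda) \neq \lambda$. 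This is the second form. If instead $\Delta$ vanishes, I would pass to the double cover $\lambda = \zeta^2$ and write the lift as $\zeta \mapsto (B(\zeta), B(-\zeta))$; single-valuedness of $f$ in $\lambda$ is automatic since $B(\zeta)+B(-\zeta)$ and $B(\zeta)B(-\zeta)$ are even in $\zeta$, and normalizing the base point to $(0,0)$ gives $B(0)=0$. This is the first form.

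The step I expect to be the main obstacle is pinning down the admissible data, namely proving that $a$ must be a genuine automorphism and that $B$ has degree at most two. The mechanism is a degree count for $\Phi_\omega \circ f$: since $\Phi_\omega \circ \pi$ is bi-M\"obius on $\DD^2$ (degree one in each variable), $\Phi_\omega \circ f$ is, for every $\omega$, a finite Blaschke product whose degree is governed by the degree of $a$ (resp.\ $B$). The extremal requirement that $\Phi_{\omega_0}\circ f$ collapse to a degree-one automorphism, combined with $f(\DD)\subset\GG_2$ (equivalently $\sup_{\omega\in\TT}|\Phi_\omega\circ f|\le 1$, with unimodular boundary values), forces the cancellation that can occur only when $\deg a \le 1$ and $\deg B \le 2$. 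Making this bookkeeping rigorous relies on the boundary behaviour of a geodesic, namely that $f$ extends to $\overline{\DD}$ with $f(\TT)\subset\partial\GG_2$, so that each $\Phi_\omega\circ f$ is inner and its degree can be read off honestly; ruling out higher-degree extremal $f$ is the technical heart.

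Finally, for the converse and for uniqueness I would verify directly that each listed $f$ is a geodesic by exhibiting the extremal $\Phi_{\omega_0}$ as its left inverse, a routine substitution. Uniqueness up to $\Aut(\DD)$ then follows by showing that the extremal parameter $\omega_0\in\TT$ is uniquely determined by the pair $(w,z)$ for non-degenerate pairs, whence the relation $\Phi_{\omega_0}\circ f = \id_{\DD}$ together with the rigidity of the lift determines $f$; the degenerate pairs lying on the image of the diagonal of $\DD^2$ require a separate, direct check.
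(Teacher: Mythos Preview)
The paper does not contain a proof of this theorem at all: it is quoted verbatim from \cite{Pfl-Zwo2005} (note the attribution ``(see \cite{Pfl-Zwo2005})'' immediately after the theorem label) and is used as a black box in the subsequent analysis of left inverses. There is therefore nothing in the present paper against which to compare your argument.

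That said, your outline is broadly along the lines of the original Pflug--Zwonek proof (normalise via the magic functions $\Phi_\omega$, lift through $\pi$, split on the discriminant), but the step you yourself flag as ``the main obstacle'' is genuinely incomplete as written. In the non-vanishing-discriminant case your lift argument correctly produces a bidisc geodesic and hence $f=\pi(\lambda,a(\lambda))$ with \emph{one} component an automorphism; however, a bidisc geodesic places no constraint on the second component, so at this stage $a$ is merely a self-map of $\DD$ with no fixed point, whereas the theorem asserts that $a$ is an automorphism. Your proposed remedy, a degree count for $\Phi_\omega\circ f$, does not obviously close this gap: for a general holomorphic $a$ the composition $\Phi_\omega\circ\pi(\lambda,a(\lambda))$ need not be a finite Blaschke product (you would first need to know that $a$ is inner and rational), and even granting that, the ``cancellation forces $\deg a\le 1$'' heuristic is not justified. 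The same issue recurs in the vanishing-discriminant case for the bound $\deg B\le 2$. In the actual argument of \cite{Pfl-Zwo2005} these degree restrictions are obtained not by cancellation bookkeeping but from the explicit rational structure of extremal discs coming from the boundary regularity of geodesics and the form of the left inverse $\Phi_{\omega_0}$; if you want a self-contained proof you will need to supply that analysis rather than the informal degree count.
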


In our considerations (because of the form of automorphisms of $\GG_2$)
it is sufficient
to take in the above theorem
\begin{equation}\label{blaschke-special}
B(\lambda)=\lambda\frac{\lambda-\alpha}{1-\bar\alpha\lambda}\text{
for some $\alpha\in\DD\cup\{1\}$}
\end{equation}
 -- we shall assume that below.

\begin{rem}\label{rem:leftG2}Recall that for each complex geodesic one of the possible inverses is (up to an automorphism of the unit disc) a function
$\Psi_{\omega}(s,p)=\frac{2p-\omega s}{2-\bar\omega s}$, where $\omega\in\overline\DD$. It is elementary to observe that no
two such functions are equal up to automorphisms of $\DD$, i.e. there are no $\omega_1,\omega_2\in\overline\DD$, $a\in\Aut\DD$ such that
$\Psi_{\omega_1}\equiv a\circ\Psi_{\omega_2}$.

Moreover, in all cases except for the case (\ref{blaschke}) and $\alpha=1$ the number $\omega$ is from $\partial\DD$.

Note that in the case (\ref{blaschke}) and $\alpha=0$ any function $-\bar\omega\Psi_{\omega}$, $\omega\in\partial\DD$
is a left inverse to $f$.

In the case (\ref{blaschke}) and $\alpha=1$ any function $-\Psi_{\omega}$, $\omega\in\overline\DD$
is a left inverse to $f$.
\end{rem}

Let us also denote the special analytic set lying in $\GG_2$ the so called {\it royal variety of $\GG_2$}:
\begin{equation}
 \mathcal T:=\{(s,p)\in\GG_2:s^2=4p\}=\{(2\la,\la^2):\la\in\DD\}.
\end{equation}

\begin{theorem}\label{theorem:symmetrizedbidisc-leftinverse} Let $f:\DD\to\GG_2$ be a complex geodesic of one of the forms
 from Theorem~\ref{theorem:symmetrizedbidisc-geodesics}.

If $f$ is of the form as in (\ref{blaschke}) with $B$ satisfying (\ref{blaschke-special}) then $f$ has a unique left inverse if and only if
$\alpha\in\DD\setminus\{0\}$.

If $f$ is of the form as in (\ref{automorphism}) then $f$ has a unique left inverse if and only if the function $\la\mapsto a(\lambda)-\lambda$
has a double root on the unit circle $\partial\DD$. Moreover, both cases do hold.
\end{theorem}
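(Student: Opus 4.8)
The plan is to handle the two families of geodesics separately, reducing each to a concrete Nevanlinna-Pick / retract problem in the bidisc via the fact (recalled before the theorem) that every left inverse of a geodesic in $\GG_2$ is, up to an automorphism of $\DD$, of the form $\Psi_\omega(s,p)=\frac{2p-\omega s}{2-\bar\omega s}$ for some $\omega\in\overline\DD$. The key unifying idea is that counting left inverses of $f$ amounts to counting admissible values of $\omega$. The central tool will be Theorem~\ref{theorem:retracts}: because each geodesic of type~\eqref{blaschke} or~\eqref{automorphism} is built from the symmetrization $\pi$ of a pair of disc maps, a left inverse $F$ of $f$ pulls back to a holomorphic $\tilde F=F\circ\pi:\DD^2\to\DD$ which restricts to the identity on a suitable one-dimensional retract of the bidisc. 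Classifying all such $\tilde F$ via the $(t,h)$-parametrization of Theorem~\ref{theorem:retracts}, and then imposing symmetry so that $\tilde F$ descends to $\GG_2$, will pin down exactly which $\omega$ occur.

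\medskip

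\emph{The Blaschke case.} For $f(\lambda)=\pi(B(\sqrt\lambda),B(-\sqrt\lambda))$ with $B$ as in~\eqref{blaschke-special}, I would first dispose of the two non-unique cases $\alpha=0$ and $\alpha=1$, which are already exhibited before the theorem (the explicit families $-\bar\omega\Psi_\omega$, $\omega\in\partial\DD$, and $-\Psi_\omega$, $\omega\in\overline\DD$). So the content is that for $\alpha\in\DD\setminus\{0\}$ the inverse is unique. The strategy is to suppose $\Psi_{\omega_1}$ and $\Psi_{\omega_2}$ (up to automorphisms of $\DD$) are both left inverses and derive $\omega_1=\omega_2$. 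Writing out $\Psi_\omega\circ f(\lambda)\equiv\lambda$ gives a rational identity in $\sqrt\lambda$, equivalently a polynomial constraint on $\omega$ coming from matching the numerator and denominator of $\Psi_\omega$ against $B(\sqrt\lambda),B(-\sqrt\lambda)$. The degree-two Blaschke factor forces enough interpolation conditions that the constraint determines $\omega$ uniquely once $\alpha\neq 0,1$; the observation (recalled in the excerpt) that distinct $\omega$ give genuinely distinct inverses then finishes this half.

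\medskip

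\emph{The automorphism case.} For $f(\lambda)=\pi(\lambda,a(\lambda))$ with $a\in\Aut(\DD)$, $a(\lambda)\neq\lambda$, I would translate ``$F$ is a left inverse of $f$'' into ``$\tilde F=F\circ\pi$ fixes the diagonal-type retract $\{(\lambda,a(\lambda)):\lambda\in\DD\}$ and is symmetric.'' After precomposing with an automorphism of $\DD^2$ sending this graph to the diagonal, I would invoke Theorem~\ref{theorem:retracts} to write the candidate in terms of the pair $(t,h)$, then re-impose symmetry in the two variables; the symmetry condition should force the free parameter to live on a set whose cardinality is governed by the fixed-point behaviour of $a-\id$ on $\partial\DD$. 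The boundary fixed points of the automorphism $a$ (the solutions of $a(\lambda)=\lambda$ on $\partial\DD$) are exactly the points where the two variables of the bidisc coalesce on the boundary, and the number of admissible $\omega$ should match the number of such boundary fixed points: uniqueness precisely when there are two distinct ones. The final sentence, that ``both cases do hold,'' I would settle by exhibiting one automorphism with two distinct boundary fixed points (the elliptic/hyperbolic normal form, e.g. $a(\lambda)=-\lambda$ adjusted to have two boundary fixed points) and one with a single (parabolic) boundary fixed point.

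\medskip

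\emph{Main obstacle.} The routine algebra in the Blaschke case is harmless; the delicate step is the automorphism case, specifically making rigorous the passage from Theorem~\ref{theorem:retracts}'s parametrization back to a \emph{symmetric} function descending to $\GG_2$, and showing that the symmetry constraint on the pair $(t,h)$ is equivalent to the boundary-fixed-point count for $a-\id$. I expect that identifying which retracts of $\DD^2$ are invariant under the swap $(\lambda_1,\lambda_2)\mapsto(\lambda_2,\lambda_1)$ — and hence correspond to honest functions on $\GG_2$ — is where the real work lies, and where the three-point Nevanlinna-Pick reduction mentioned in the introduction will enter.
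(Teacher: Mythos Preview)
Your plan rests on a misreading of the passage before the theorem. The paper says that for each complex geodesic \emph{one of the possible} left inverses is (up to $\Aut(\DD)$) of the form $\Psi_\omega$; it does \emph{not} assert that every left inverse has this form. The remark immediately following the theorem makes this explicit: restricting to the family $\{\Psi_\omega\}$ is precisely what Agler--Young had already done, and the present theorem is a strict generalization. Consequently, counting admissible $\omega$'s suffices for the non-uniqueness directions (and this is exactly how the paper disposes of $\alpha=0$, $\alpha=1$, and the automorphism case with strict inequality $|1-\tau|<2|\alpha|$), but it cannot establish uniqueness: you must rule out left inverses that are not $\Psi_\omega$'s at all. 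Your ``rational identity in $\sqrt\lambda$'' for the Blaschke case would at best reprove the Agler--Young result.

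The paper's actual arguments for uniqueness are quite different from what you sketch. In the Blaschke case with $\alpha\in\DD\setminus\{0\}$ it lifts an arbitrary left inverse to a solution of the symmetric three-point Nevanlinna--Pick problem $(0,0)\mapsto 0$, $(B(\pm\sigma),B(\mp\sigma))\mapsto\sigma^2$ in $\DD^2$ and invokes the Agler--McCarthy uniqueness criterion, verifying separately that the problem is extremal (symmetrization), non-degenerate (a Poincar\'e-disc estimate), and strictly two-dimensional (Schwarz--Pick rigidity). So the three-point reduction belongs to the Blaschke case, not the automorphism case as you guessed. In the automorphism case (equality $|1-\tau|=2|\alpha|$, i.e.\ a double boundary root), the paper does use the retract parametrization of Theorem~\ref{theorem:retracts}, but the mechanism is not ``imposing the swap symmetry on $(t,h)$.'' Instead, symmetry of $G=F\circ\pi$ yields the single extra interpolation value $G(\beta,0)=0$; substituting into the retract formula gives an explicit expression for $h(\beta,0)$ whose modulus is $\geq |1-\sigma|/(2|\beta|)=1$, with equality only at $t=1/2$. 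The maximum principle then forces $h$ to be a uniquely determined unimodular constant. This estimate is the heart of the argument and is absent from your outline.
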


\begin{rem}
 Note that the above theorem is a generalization of Theorem 1.6 in \cite{Agl-You2006}
where the problem of uniqueness of left inverses (but restricted to functions $\Psi_{\omega}$) is settled.
\end{rem}

\begin{proof} In view of the considerations before the theorem
in the first case we need to consider only the case (\ref{blaschke}) with $\alpha\in\DD\setminus\{0\}$.

At first note that there is an (isometric) identification between the space
$H^{\infty}(\GG_2)$ and $H^{\infty}_s(\DD^2):=\{F\in H^{\infty}(\DD^2):F(z_1,z_2)=F(z_2,z_1)\}$ given by the formula
\begin{equation}
 H^{\infty}_s(\DD^2)\owns F\mapsto \{\GG_2\owns (s,p)=\pi(z_1,z_2)\mapsto F(z_1,z_2)\in\CC\}\in H^{\infty}(\GG_2).
\end{equation}

Fix $\sigma\in\DD\setminus\{0\}$. Consider now the following three point Nevanlinna-Pick problem in the bidisc.

\begin{equation}\label{pick-bidisc}
  (0,0)\to 0,\; (B(\sigma),B(-\sigma))\to\sigma^2,\;
  (B(-\sigma),B(\sigma))\to\sigma^2.
\end{equation}

We already know that there is a holomorphic $F:\GG_2\to\DD$ such that $F(f(\lambda))=\lambda$.  Consequently, the function
$F\circ\pi$ is the Nevanlinna-Pick solution for the above problem. We claim that the solution is unique
which would imply the uniqueness of the left inverse to $f$.

We make use of the results from \cite{Agl-McC2002} (Theorem 12.13). According to that result the problem (\ref{pick-bidisc}) will be unique, which shall show that the corresponding Nevanlinna-Pick problem for the symmetrized bidisc
is unique, if the problem (\ref{pick-bidisc}) is
\begin{itemize}
 \item extremal,
 \item non-degenerate, i.e. none of three 2-point Pick subproblems of problem \eqref{pick-bidisc} is extremal,
 \item strictly two-dimensional.
\end{itemize}
The extremality of the problem is a consequence of the fact that if there were a solution $\tilde G$ of the problem (\ref{pick-bidisc})
with the norm smaller than one then the (symmetrization) function $G$ given by the formula
\begin{equation}
 G(z):=(\tilde G(z_1,z_2)+\tilde G(z_2,z_1))/2
\end{equation}
 would be symmetric with the norm smaller than $||G||<1$. This would yield
a holomorphic function (denoted by the same letter) $G$ defined on $\GG_2$ with the norm smaller than one and such that
$G(0)=0$ and $G(\pi(B(\sigma),B(-\sigma))=\sigma^2$. That would mean that the function $f$ were not a geodesic - contradiction.

The fact that the problem (\ref{pick-bidisc}) is non-degenerate will follow from the inequality
\begin{equation}
 |\sigma^2|<\max\{|B(\sigma)|,|B(-\sigma)|\}
\end{equation}
or equivalently
\begin{equation}
 |\sigma|<\max\{\left|\frac{\sigma-\alpha}{1-\bar\alpha\sigma}\right|,\left|\frac{\sigma+\alpha}{1-\bar\alpha\sigma}\right|\}.
\end{equation}
Note that the Poincar\'e closed disc with center at $\sigma$ and the radius $p(0,\sigma)$ contains at most one of the points:
$\alpha$ and $-\alpha$ for $\alpha\in\DD\setminus\{0\}$. This fact implies the last inequality and completes the proof of non-degeneracy.

To show the fact that the problem is strictly two-dimensional let us note
that otherwise we would have the existence of a holomorphic $\varphi:\DD\to\DD$
with $\varphi(0)=0$, $\varphi(B(\sigma))=\sigma^2$, $\varphi(B(-\sigma))=\sigma^2$.
Write $\varphi(B(\lambda))=\lambda\psi(\lambda)$
where $\psi:\DD\to\overline\DD$. Then $\psi(\sigma)=\sigma$ and $\psi(-\sigma)=-\sigma$. Now the Schwarz-Pick Lemma implies
that $\psi=id_{\DD}$ and then $\varphi(B(\la))=\la^2$ so
\begin{equation}
0=\varphi(B(0))=\varphi(B(\alpha))=\alpha^2
\end{equation}
-- a contradiction.

\vskip1cm

Now we consider the case (\ref{automorphism}).
Let $a(\lambda)=\tau\frac{\lambda-\alpha}{1-\bar\alpha\lambda}$ where $|\tau|=1$. Recall that the condition
$a(\lambda)\neq\lambda$, $\lambda\in\DD$ is equivalent to the inequality
 \begin{equation}\label{inequality}
  |1-\tau|\leq 2|\alpha|.
 \end{equation}
It follows from the second part of the proof of Theorem~3 in \cite{Pfl-Zwo2005} that $\Psi_{\omega}$ is (up to an automorphism) a left inverse
iff
\begin{equation}\label{description}
 \frac{(1+\tau\alpha\bar\omega)^2}{\tau}>0.
\end{equation}
It was proven in \cite{Pfl-Zwo2005} that for the fixed $\tau$ and $\alpha$
there is an $\omega$ satisfying the above inequality if and only if $|1-\tau|\leq 2|\alpha|$. Therefore,
simple geometric observation shows that in the case the inequality
in (\ref{inequality}) is sharp there will be two $\omega$'s from $\partial\DD$ for which the inequality in (\ref{description})
holds. This implies that in the case (\ref{automorphism}) with the sharp inequality in (\ref{inequality}) there is no uniqueness of
left inverses.

We are left with the case when the inequality (\ref{inequality}) becomes equality, i.e. we assume that
$|1-\tau|=2|\alpha|$ which is equivalent to the fact that the equation $a(\la)-\la$ has a double root on the unit circle.
We claim that in this situation there is only one left inverse.

Let $F$ be a left inverse to $f$, $f(\la)=\pi(\la, a(\la))$.
Then $G=F\circ \pi:\DD^2 \to \DD$ is a left inverse for the geodesic (in $\DD^2$): $\DD\owns\la \mapsto (\la, a(\la))\in\DD^2$.
It is also a symmetric function.

Making use of a description of all solutions of the two-point Nevanlinna-Pick problem
\begin{equation}
 (0,0)\mapsto 0,\quad (1/2,1/2)\mapsto 1/2
\end{equation}
in the bidisc (or equivalently the descriptions of the retracts of the diagonal in the bidisc) we get in view
of  Theorem~\ref{theorem:retracts} that

\begin{equation}\label{eq:unique-left-inverse}
G(z_1,z_2)=\frac{tz_1+(1-t)b(z_2)-z_1 b(z_2)h(z_1,z_2)}{1- ((1-t)z_1+ tb(z_2))h(z_1,z_2))}
\end{equation}
where $h$ lies in the closed unit ball of $H^{\infty}(\DD^2)$, $t\in[0,1]$ and $b:=a^{-1}$.

Let $b(\la)=\sigma\frac{\la - \beta}{1-\la \bar\beta},$
where $\sigma\in \TT$ and $\beta\in \DD_*$. Note that the relation $|1-\sigma|=2|\beta|$ is satisfied (consequently,
$\sigma$ cannot be equal to $\pm 1$). Observe that $G(b(\la),\la))=b(\la)$, in particular, making use of the symmetry of $G$
we find that $\la\mapsto G(\la, b(\la))$ vanishes at $\beta$.
In other words $t\beta +(1-t) b(0)-\beta b(0)h(\beta,0)=0$.
Since $\beta\neq 0$ we easily find that $t-(1-t)\sigma+\sigma \beta h(\beta,0)=0$,
i.e. \begin{equation}
\label{eq: h} h(\beta,0)=\frac{t - (1-t)\sigma}{-\sigma \beta}.
\end{equation}
The modulus of the term on the right side of the above equation attains its (unique) minimum when $t=1/2.$
Therefore, $$|h(\beta, 0)|\geq \frac{|1-\sigma|}{2|\beta|}=1.$$
The maximum principle implies that $h$ is constant.
As we have already shown, $t$ is uniquely determined ($t=1/2$) so (the constant function) $h$ is uniquely determined
as well by the equation \eqref{eq: h}.
\end{proof}

\begin{rem}\label{remark:uniqueness-of-t}
It follows from the proof of the previous theorem that the uniquely determined function
$G$ in the equality (\ref{eq:unique-left-inverse}) is such that $t=1/2$ and $h$ is a constant from $\TT$ equal to $\frac{\sigma-1}{2\sigma\beta}$ (note that $h$ is a unique point in the unit circle such that $a(\bar h) = \bar h$).
\end{rem}
Using this observation we get the following result which plays an important role in the next section.
\begin{lemma}\label{remg2} Let $\phi$ be complex geodesic in the symmetrized bidisc of the form $\phi:\DD\owns \la\mapsto \pi(a(\la), b(\la))\in\GG_2$, where $a$ and $b$ are M\"obius maps. Assume that the equation $a(\la)=b(\la)$, $\la\in \overline \DD$, has one double root lying on the unit circle and let $F:\GG_2\to \DD$. Then the only left inverse of $\phi$ is given by the formula
$$F(\pi(z_1, z_2)) = \frac{\frac12 \left(a^{-1}(z_1)+ b^{-1}(z_2) \right) - a^{-1}(z_1) b^{-1}(z_2) h}{1- \frac12\left(a^{-1}(z_1) + b^{-1}(z_2) \right) h},\quad z_1,z_2\in \DD,$$ where $h\in \TT$ solves the equation $a(\bar h) = b(\bar h)$.
\end{lemma}
\begin{proof}
The mapping $$\pi_a:\GG_2\ni \pi(\la_1, \la_2)\mapsto \pi(a(\la_1), a(\la_2))\in \GG_2$$ is an automorphism of $\GG_2$ and $F\circ \pi_a$ is a left inverse of the geodesic of the form $\la\mapsto \pi(\la, a^{-1}(b(\la)))$. Therefore, the assertion follows from Remark~\ref{remark:uniqueness-of-t}.
\end{proof}

\section{Left inverses in the tetrablock (Nevanlinna-Pick problem for two points
in the tetrablock)}\label{section:tetrablock}

Recall that all invariant functions coincide on $\EE$ (see \cite{Edi-Kos-Zwo2012}).
In particular, complex geodesics passing through any pair of points in the tetrablock do exist.
Moreover, a special role in the study of the geometry of $\EE$ is played by the set
$\Sigma:=\{z\in\EE:z_1z_2=z_3\}$ called the \textit{royal variety of $\EE$}.

We start with recalling basic properties of $\EE$.
First note that $\sigma:\EE\owns z\mapsto(z_2,z_1,z_3)\in\EE$ is an automorphism of $\EE$.

For $a,b\in\DD$
and  $\omega_1,\omega_2\in\TT$ we put
\begin{multline}\label{autE} \Psi_{a,b,\omega_1,\omega_2}(z_1,z_2,z_3)=\\
\left(\omega_1\frac{z_1+a+\overline {b}z_3+a\bar b z_2}{1+\bar az_1+\bar bz_2+\overline{ab}z_3}, \omega_2\frac{z_2+b+\overline az_3+b\overline
az_1}{1+\bar az_1+\bar bz_2+\overline{ab}z_3},
\omega_1\omega_2\frac{z_3+az_2+bz_1+ab}{1+\bar az_1+\bar
bz_2+\overline{ab}z_3} \right),
\end{multline}
$z=(z_1,z_2, z_3)\in \EE$. Recall that the group of automorphism of $\EE$ is given by $$\Aut(\EE)=\{\Psi_{a,b,\omega_1,\omega_2},
\Psi_{a,b.\omega_1,\omega_2}\circ\sigma:
a,b\in\DD, \omega_1,\omega_2\in\TT\}$$ (see \cite{You2008}, see also \cite{Kos2011}).

Similarly as in the case of the symmetrized bidisc for any pair of points there exists
a complex geodesic $\varphi:\DD\to \EE$ passing through them satisfying one the following three conditions:
\begin{enumerate}\label{enumarate:(i)}
\item $\varphi$ lies entirely in the royal variety $\Sigma$,
 \item $\varphi$ intersects $\Sigma$ exactly at one point,
 \item $\varphi$ omits $\Sigma$.
\end{enumerate}

It is well known (see \cite{You2008}) that $\Aut(\EE)$ acts transitively on $\Sigma$. Therefore, if $\varphi$ is a complex geodesic of $\EE$ such that $\varphi(\DD)\cap \Sigma\neq\emptyset$ we may always assume that $\varphi(0)=0$.

In the case when $\va:\DD\to \EE$ is a complex geodesic of $\EE$ such that $\va(\DD) \cap \Sigma = \emptyset$ making use of the description of the group of automorphisms of the tetrablock we may assume that $\va(0)=(0,0,-\be^2)$ for some $\be\in (0,1).$

The following theorem summarizes the description of complex geodesics in the tetrablock obtained in \cite{Abo-Whi-You2007}, \cite{Edi-Zwo2009} and \cite{Edi-Kos-Zwo2012}.
\begin{theorem}\label{geo-E} {\rm (see \cite{Abo-Whi-You2007}, \cite{Edi-Zwo2009}, \cite{Edi-Kos-Zwo2012})}
Let $\varphi:\DD\to \EE$ be a complex geodesic in the tetrablock.

If $\va(0)=0$, then there are $\omega_1,\omega_2\in \TT$, a number $C\in [0,1]$ and a holomorphic mapping $\psi:\DD\to \overline \DD$, $\psi(0)=-C$ such that
\begin{equation}\label{form0} \varphi(\la)=\left(\omega_1 \frac{\psi(\la)+C}{1+C}, \omega_2 \la \frac{1+C \psi(\la)}{1+C}, \omega_1 \omega_2 \la \psi (\la)\right),\quad \la\in \DD.
\end{equation}

If $\va(\DD)\cap \Sigma=\emptyset$ and $\va(0)=(0,0,-\be^2)$ for some $\be>0$, then there exist
$a,b,c,d\in\overline\DD$ with $|a|^2+|b|^2=|c|^2+|d|^2=1$ and
$a\overline c+b\overline d=0$  and there exists a holomorphic mapping $Z:\DD\to\DD$ satisfying either $Z(\la)=\la$, $\la\in \DD,$ or $|Z(\la)|<|\la|,$ $\la\in \DD\setminus\{0\},$ such that
\begin{equation}\label{formofva}
\va(\lambda)=\left(
\frac{A(\lambda)(1-\beta^2)}{\Delta(\lambda)},
\frac{C(\lambda)(1-\beta^2)}{\Delta(\lambda)},
\frac{A(\lambda)C(\lambda)-(B(\lambda)+\beta)^2}{\Delta(\lambda)}
\right),
\end{equation}
where $A(\lambda)=a^2\lambda+b^2 Z(\lambda)$,
$B(\lambda)=ac\lambda+bd Z(\lambda)$, $C(\lambda)=c^2\la +d^2
Z(\lambda)$, and
$\Delta(\lambda)=(1+\beta
B(\lambda))^2-A(\lambda)C(\lambda)\beta^2$.
\end{theorem}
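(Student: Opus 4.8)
This is a classification statement, so the plan is to attach to each geodesic $\va$ a normal form reachable by an automorphism of $\EE$ and then solve the resulting extremal problem by hand. Since all invariant functions coincide on $\EE$, every geodesic admits a left inverse $F\colon\EE\to\DD$ with $F\circ\va=\id_\DD$, and I would use this relation as the main algebraic constraint throughout. The first step is to exploit the explicit form of $\Aut(\EE)$ recorded above together with the transitivity of $\Aut(\EE)$ on $\Sigma$: a geodesic that meets the royal variety can be translated so that $\va(0)=0$, whereas a geodesic disjoint from $\Sigma$ can be moved so that $\va(0)=(0,0,-\be^2)$ with $\be\in(0,1)$, the number $\be$ arising as an automorphism invariant measuring how far $\va$ stays from $\Sigma$. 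This reduction cuts the problem into exactly the two normalized cases of the statement.

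For $\va(0)=0$ I would read off (\ref{form0}) from the Carath\'eodory extremals of the tetrablock, namely the magic functions $\EE\ni x\mapsto\frac{x_1-\omega x_3}{1-\omega x_2}\in\DD$, $\omega\in\TT$, which realise $c_\EE$. Feeding a left inverse into $F\circ\va=\id_\DD$ and comparing with these extremals forces the three components of $\va$ to be built from a single Schur function; writing that function as $\psi$ with $\psi(0)=-C$, $C\in[0,1]$, and keeping track of the two unimodular constants $\omega_1,\omega_2$ left undetermined by the normalization, I expect to recover (\ref{form0}) exactly. The converse, that every such datum yields a geodesic, I would settle by exhibiting an explicit left inverse and invoking $c_\EE=\tilde k_\EE$.

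For $\va(\DD)\cap\Sigma=\emptyset$ the plan is to pass to the matrix model of the tetrablock: $x\in\overline\EE$ precisely when $x=(A_{11},A_{22},\det A)$ for some $2\times2$ contraction $A$, and the projection $\pi_\EE\colon A\mapsto(A_{11},A_{22},\det A)$ maps the operator-norm ball of $M_2(\CC)$ onto $\EE$. A geodesic of $\EE$ then lifts to an extremal disc of this Cartan domain of type $I_{2,2}$, whose geodesics are affine-fractional and governed by a boundary unitary datum together with a single Schur function. The vectors $(a,b)$ and $(c,d)$ are the rows of the unitary part of that datum, which is exactly what produces the relations $|a|^2+|b|^2=|c|^2+|d|^2=1$ and $a\bar c+b\bar d=0$; pushing the lifted disc through $\pi_\EE$ and normalising $\va(0)=(0,0,-\be^2)$ should give the three quotients of (\ref{formofva}) with common denominator $\Delta$, the alternative $Z(\la)=\la$ versus $|Z(\la)|<|\la|$ recording whether the lift is itself a geodesic disc of the matrix ball or only an extremal map into it.

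The main obstacle, in both cases, is the extremality and exhaustiveness bookkeeping rather than the algebra. I would need to verify that the candidate maps are genuine geodesics and not merely analytic discs into $\EE$, again by matching a left inverse, and to confirm that the trichotomy---lying in $\Sigma$, meeting $\Sigma$ once, or omitting $\Sigma$---is exhaustive, so that no geodesic escapes the normalization. The delicate points will be the degenerate parameter values ($C\in\{0,1\}$ in the first case and the rank-one configurations of $(a,b,c,d)$ in the second), and showing that $\be$ is genuinely a well-defined invariant of the geodesic; getting these boundary cases right, rather than the generic computation, is where I expect the real work to lie.
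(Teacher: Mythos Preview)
The paper does not actually prove Theorem~\ref{geo-E}. It is stated as a summary of results obtained elsewhere (the sentence preceding the theorem reads ``The following theorem summarizes the description of complex geodesics in the tetrablock obtained in \cite{Abo-Whi-You2007}, \cite{Edi-Zwo2009} and \cite{Edi-Kos-Zwo2012}''), and the theorem itself carries those three citations in its header. So there is no in-paper proof to compare your proposal against; the authors simply import the classification and use it.

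That said, your outline is in the right spirit and roughly parallels what one finds in the cited sources, but there is one genuine gap worth flagging. In the $\va(\DD)\cap\Sigma=\emptyset$ case you propose to lift $\va$ to an extremal disc of the matrix ball and then read off the unitary data $(a,b,c,d)$ and the Schur function $Z$. The projection $\Pi$ from the ball of $2\times2$ matrices onto $\EE$ is not injective, so the existence of such a lift is not automatic: you must first show that for a geodesic $\va$ of $\EE$ there is a holomorphic $\tilde\va$ into the matrix ball with $\Pi\circ\tilde\va=\va$ \emph{and} that $\tilde\va$ is itself extremal for the Kobayashi metric of the matrix ball (or at least has the structure of a Cartan-domain geodesic). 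This is exactly the step where the cited papers do real work, and it is not a formality---it uses the specific structure of the Carath\'eodory extremals $\Phi_\omega$ of $\EE$ together with the equality $c_\EE=\tilde k_\EE$. Your sketch treats the lift as given, which would leave the argument incomplete. A smaller point: the domain used in this paper is $\mathcal R_{II}$ (symmetric $2\times2$ contractions), not the type $I_{2,2}$ ball of all $2\times2$ contractions; both map onto $\EE$ under $\Pi$, but if you intend to lift to $I_{2,2}$ you should check that the resulting normal form matches~(\ref{formofva}).
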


Observe that if $\va$ is of the form \eqref{form0}, then $\va(\DD)$ lies entirely in the royal variety $\Sigma$ if and only if $C=0.$ In the other case $\va(\DD)$ intersects $\Sigma$ only at $0$.

\begin{rem}\label{rem:interS} {\rm (see \cite{Edi-Kos-Zwo2012})}. Suppose that $\va:\DD\to \EE$ is a complex geodesic of the form \eqref{formofva}.

If $|Z(\la)|<|\la|,$ $\la\in \DD,$ then the condition $\va(\DD)\cap \Sigma=\emptyset$ is equivalent to $$(1+\be^2) |c||d| \leq \be.$$

Moreover, if $Z(\la)=\la,$ $\la\in \DD$, then the condition $\va(\DD)\cap \Sigma=\emptyset$ is always satisfied.

\end{rem}

As shown in \cite{Abo-Whi-You2007} for any complex geodesic $\va$ of $\EE$ intersecting the royal variety there is an $\omega\in \TT$ such that $\Phi_{\omega} \circ f\in \Aut(\DD)$ or $\Phi_{\omega}\circ \sigma \circ f\in \Aut(\DD)$,
where $$\Phi_{\omega}(z)=\frac{\omega z_3 - z_1}{\omega z_2 -1},\quad z\in \EE.$$

On the other hand the family of functions $\Phi_{\omega}$ and $\Phi_{\omega}\circ \sigma$ is not rich enough to replace
in the definition of the Carath\'eodory distance of $\EE$ the family of all bounded holomorphic functions (see \cite{Edi-Zwo2009}). However, we were able to construct functions that we need to add to this family. They were defined in \cite{Edi-Kos-Zwo2012} in a quite implicit way. Below, for the convenience of the reader, we present a formal construction. The functions that we need to add to this family contain the functions $\tilde\Phi_{\omega}$ and $\tilde\Phi_{\omega}\circ \sigma$ that we construct below. Namely,
it follows from the Rouch\'e Theorem that for any $z\in \EE$, $\omega\in\TT$ the equation
\begin{equation}
\Phi_{\omega}(z_1,\lambda z_2,\lambda z_3)=\lambda
\end{equation}
has exactly one solution $\la=:\tilde \Phi_{\omega}(z)$ in the unit disc.
This defines a holomorphic function $\tilde\Phi_{\omega}:\EE\to \DD$ which is a left inverse for some of the geodesics in $\EE$.
After some simple calculations one may find an explicit formula for it: $$\tilde \Phi_{\omega}(z):=\frac{2 z_1}{1+\omega z_3+\sqrt{(1 +\omega z_3)^2 - 4\omega z_1 z_2}}.$$

Consider the mapping
\begin{equation}\label{eq:coverE}
 \Pi:\left(\begin{array}{cc}
  z_1 & a_1 \\
  a_2  & z_2 \\
\end{array}%
\right)
\mapsto(z_1,z_2,z_1z_2-a_1a_2)\end{equation}
Let $\mathcal R_{II}$ denote the \emph{Classical Cartan domain of the second type} in $\CC^2$, i.e. the set of all symmetric two-by-two matrices with the operator norm smaller than one.
Recall that the tetrablock may be equivalently given as the image of $\mathcal R_{II}$ under the mapping $\Pi$ (see \cite{Abo-Whi-You2007}). For the basic properties of classical Cartan domains we refer the reader to \cite{Hua}.

We shall also make use of the connection between the symmetrized bidisc and the tetrablock. Namely, for any $\omega\in\TT$ there is an embedding of $\GG_2$ into $\EE$ given by the formula
\begin{equation}
\GG_2\owns (s,p)\mapsto(\omega s/2,s/2,\omega p)\in\EE
\end{equation}

On the other hand for any $\omega\in\overline\DD$ we have a mapping (see \cite{Bha2012}, see also \cite{Zwo2013})
\begin{equation}
\EE\owns x\mapsto(x_1+\omega x_2,\omega x_3)\in\GG_2.
\end{equation}

Below we present a (complete) solution of the problem of uniqueness of left inverses in the tetrablock.

\begin{theorem} Let $\va:\DD\to \EE$ be a complex geodesic of the form \eqref{form0}. Then  $\va$ has a unique left inverse if and only if
 $C\in (0,1)$ and the function $\psi$ is not an automorphism of $\DD$.

If $\va:\DD\to \EE$ is a complex geodesic of the form \eqref{formofva}, then $\va$ has a unique left inverse if and only if $|Z(\la)|<|\la|,$ $\la\in \DD\setminus\{0\},$ and $|c||d|(1+\be^2) =\be$.
\end{theorem}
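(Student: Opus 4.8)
The statement has two cases, tracking the two families of geodesics from Theorem~\ref{geo-E}. In each case I must prove an "if and only if" about uniqueness of the left inverse, so the strategy is to split into a non-uniqueness direction (exhibit two genuinely different left inverses) and a uniqueness direction (show every left inverse is forced to be a single specific function). The recurring tool will be to pull back the problem to the bidisc or the symmetrized bidisc via the maps collected just before the theorem — the embeddings $\GG_2\hookrightarrow\EE$, the projections $\EE\to\GG_2$, the covering $\Pi$ from $\mathcal R_{II}$, and the explicit left inverses $\Phi_\omega,\tilde\Phi_\omega$. Throughout I expect to use Theorem~\ref{theorem:retracts} on retracts of the diagonal in the bidisc and the three-point/two-point Nevanlinna--Pick uniqueness machinery already invoked in Section~\ref{section:symmetrizedbidisc}, together with Remark~\ref{remark:uniqueness-of-t} and Remark~\ref{remg2}.

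\textbf{Case \eqref{form0} (geodesic meeting $\Sigma$, normalized so $\va(0)=0$).} Here the parameter $C\in[0,1]$ measures how $\va$ sits relative to the royal variety: $C=0$ means $\va(\DD)\subset\Sigma$, while $C\in(0,1]$ means $\va$ meets $\Sigma$ only at $0$. I would first dispose of the degenerate endpoints. If $C=0$ then $\va$ lies in $\Sigma$, which is (biholomorphic to) $\DD$, and left inverses are produced by the whole family $\Phi_\omega$ (together with $\Phi_\omega\circ\sigma$), giving an infinite family — so non-uniqueness. If $\psi$ is an automorphism of $\DD$, then $\va$ is essentially a geodesic of the "automorphism type", and I expect an analogue of the sharp-inequality case in $\GG_2$: the defect of the associated extremal Pick problem raises to two boundary solutions, hence non-uniqueness. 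For the remaining case $C\in(0,1)$ with $\psi$ non-automorphic, the plan is to transport a left inverse $F$ of $\va$ to a symmetric left inverse on $\DD^2$ (via $\Pi$ or via the $\GG_2$-projection), reduce to a two- or three-point Nevanlinna--Pick problem, and then verify that this problem is extremal, non-degenerate, and strictly two-dimensional — so that Theorem 12.13 of \cite{Agl-McC2002} forces uniqueness, exactly as in the proof of Theorem~\ref{theorem:symmetrizedbidisc-leftinverse}. The non-automorphicity of $\psi$ should be precisely what guarantees strict two-dimensionality (the Schwarz--Pick rigidity step).

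\textbf{Case \eqref{formofva} (geodesic omitting $\Sigma$, normalized so $\va(0)=(0,0,-\be^2)$).} Here the data are $a,b,c,d$ with the unitary relations, together with $Z:\DD\to\DD$. By Remark~\ref{rem:interS}, the omission condition $\va(\DD)\cap\Sigma=\emptyset$ reads $(1+\be^2)|c||d|\le\be$, and the claimed uniqueness criterion is the conjunction of $|Z(\la)|<|\la|$ for $\la\neq 0$ (i.e. $Z$ is not the identity) and the \emph{equality} $(1+\be^2)|c||d|=\be$ (i.e. the geodesic is "tangent" to $\Sigma$ in the limiting sense). So I would show non-uniqueness whenever either condition fails: if $Z=\id$ the formula degenerates and a one-parameter family of left inverses (built from the $\tilde\Phi_\omega$ and the $\GG_2$-projections) survives; if the inequality is strict, there is room to perturb, again yielding many left inverses. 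For the uniqueness direction, assuming $|Z(\la)|<|\la|$ and the boundary equality, I would again pull $F$ back through $\Pi$ to a symmetric function on $\DD^2$, use Theorem~\ref{theorem:retracts} to write it in the $(t,h)$-form, and then exploit Remark~\ref{remg2}: the tangency equality $(1+\be^2)|c||d|=\be$ should force, exactly as $|1-\sigma|=2|\beta|$ did in the symmetrized-bidisc proof, that the relevant interpolation value has modulus $1$ at an interior point, so the maximum principle pins $h$ to a constant of modulus one and fixes $t=1/2$. That forces a single $F$.

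\textbf{Main obstacle.} The hard part will be the uniqueness direction in case \eqref{formofva}: unlike in $\GG_2$, here there is no single clean three-point Pick problem handed to us, so I must manufacture the right finite interpolation problem on $\DD^2$ from the rational data $(a,b,c,d,\be,Z)$ and then show it is extremal, non-degenerate, and strictly two-dimensional. The delicate point is translating the analytic tangency equality $(1+\be^2)|c||d|=\be$ into the precise boundary-modulus-one condition that makes the maximum-principle/Schwarz--Pick rigidity bite; I expect this to require a careful computation of where $\va$ (or its $\Pi$-lift) touches the distinguished boundary and of the value of the transported $h$ at an interior node, mirroring equation \eqref{eq: h} but with bookkeeping that is heavier because of the four parameters and the presence of $Z$.
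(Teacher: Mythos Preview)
Your overall architecture (split each case into non-uniqueness and uniqueness, and transport to $\GG_2$ or $\DD^2$) is right, but two of the key mechanisms you propose would not work, and they diverge from what the paper actually does.

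\medskip
\textbf{Uniqueness in case \eqref{form0}.} You propose to reduce to a single three-point Nevanlinna--Pick problem on $\DD^2$ and invoke Theorem~12.13 of \cite{Agl-McC2002}. But there is no natural symmetric bidisc problem here: $F\circ\Pi$ lives on $\mathcal R_{II}$, not on $\DD^2$, and $\va$ does not factor through the symmetrization map. The paper's route is different and essential. First, one introduces the two-variable lift $\tilde\va(\la,\nu)$ of the geodesic and uses uniqueness of left inverses in $\DD^2$ (Theorem~\ref{theorem:uniqueness} for $n=2$, which needs precisely the hypothesis $|\psi(\la)|<|\la|$) to show that $F\circ\tilde\va$ is independent of $\nu$. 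Second, for each $\omega\in\TT$ one uses the embedding $\GG_2\hookrightarrow\EE$ along $(s,p)\mapsto(\omega s/2,s/2,\omega p)$ to get a $\GG_2$-geodesic $f_\omega$ meeting the royal variety only at the origin, so by Theorem~\ref{theorem:symmetrizedbidisc-leftinverse} its left inverse is some $b(\omega)\Psi_{a(\omega)}$. The real work is then to analyze the holomorphic $\omega$-dependence of $a(\omega),b(\omega)$ (they extend holomorphically to $\overline\DD$, $b$ is a rotation, $a(\omega)=e^{i\eta}\omega$), which pins down $F$ on an open set. None of this is a finite Pick problem; it is a one-parameter family of $\GG_2$ problems glued via holomorphicity in $\omega$. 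You also omit the $C=1$ endpoint, where $\va(\la)=(0,0,-\omega_1\omega_2\la)$ and both $\Phi_1$ and $\Phi_1\circ\sigma$ are left inverses.

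\medskip
\textbf{Non-uniqueness in case \eqref{formofva} when $|c||d|(1+\be^2)<\be$.} ``There is room to perturb'' is not a proof and does not reflect the actual argument. The paper first uses \cite{Edi-Kos-Zwo2012} and an automorphism of $\EE$ commuting with $\tau_\la(z)=(z_1,\la z_2,\la z_3)$ to reduce to the canonical form
\[
\va(\la)=\Big(\tfrac{(1-\be^2)\la}{1-\be^2\la\tilde Z(\la)},\ \tfrac{(1-\be^2)\tilde Z(\la)}{1-\be^2\la\tilde Z(\la)},\ \tfrac{\la\tilde Z(\la)-\be^2}{1-\be^2\la\tilde Z(\la)}\Big),
\]
for which $\tilde\Phi_1$ is one left inverse. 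A genuinely different left inverse is then \emph{constructed} on $\mathcal R_{II}$: one defines $G_h$ by formula~\eqref{Gh} for $h$ in the closed unit ball of $H^\infty(\mathcal R_{II})$, checks via Remark~\ref{remark} that $G_h$ maps $\mathcal R_{II}$ to $\overline\DD$, and shows that if $h$ is itself a left inverse of $\la\mapsto\begin{pmatrix}(1-\be^2)\la&-\be\\-\be&0\end{pmatrix}$ then the symmetrized descent $F_h$ to $\EE$ is a left inverse of $\va$. Choosing $h=\tilde h\circ\varphi_{-\be}$ with $\tilde h$ the projection on the $(1,1)$-entry gives $F_h(\la,0,0)=(1-\be^4)\la\neq\la=\tilde\Phi_1(\la,0,0)$. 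This is a substantial explicit construction, not a perturbation.

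\medskip
\textbf{Uniqueness in case \eqref{formofva} when $|c||d|(1+\be^2)=\be$.} Your sketch (pull back through $\Pi$, write in $(t,h)$-form, get $|h|=1$ at an interior point) has the right endpoint but skips the mechanism. The paper again runs the $\omega\in\TT$ family: for each $\omega$ the map $\va_\omega=\pi(c_\omega,d_\omega)$ is a $\GG_2$-geodesic and $F$ restricted to the $\omega$-slice has the retract form \eqref{Fs} with data $(t_\omega,h_\omega)$. The tangency equality is used at the single value $\omega=-1$, where one computes explicitly that $c_{-1}=d_{-1}$ has a \emph{double} root on $\TT$, so Remark~\ref{remg2} gives $t_{-1}=\tfrac12$ and $h_{-1}\in\TT$. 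The remaining argument differentiates \eqref{Fs} at $(c_\omega(0),d_\omega(0))$, derives an identity of the form $\pm(t_\omega-\tfrac12)\sqrt{m(\omega)}=\va(\omega)m(\omega)+\psi(\omega)$, squares it and applies Gentili's lemma to conclude that $(t_\omega-\tfrac12)^2$ is a Laurent polynomial in $m(\omega)$; combined with $t_{-1}=\tfrac12$ this fixes $t_\omega$ and then forces $h_\omega(0)=\omega$, whence $h_\omega$ is constant by the maximum principle. So the crucial ingredients you are missing are: the explicit computation at $\omega=-1$, the differentiation step that produces a relation between $t_\omega$ and $m(\omega)$, and Gentili's lemma to handle the real-valuedness of $t_\omega$.
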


The proof of the above theorem is presented below and it is divided into several steps.
\subsection{Geodesics lying entirely in $\Sigma$}
Any complex geodesic $\varphi:\DD\to\EE$ such that $\varphi (\DD)\subset \Sigma$ and $\varphi(0)=0$ is, up to a composition with a linear automorphism of the tetrablock,  of the form
$$\varphi(\la)=(\la,a(\la),\la a(\la)),\quad \la \in \DD,$$ where a holomorphic function $a:\DD\to \DD$ fixes the origin.
Then both $\Phi_1$ and the projection on the first variable are left inverses for $\varphi$.

\subsection{Geodesics intersecting $\Sigma$ exactly at one point}\label{ss} Let us focus on the possibility when the complex geodesic
$\va:\DD\to \EE$ satisfies $\# (\varphi (\DD)\cap \Sigma)=1$ and $\varphi(0)=0$. Then $\varphi$ is of the form \eqref{form0}, where $C\in (0,1]$ (recall that $C=0$ cannot occur in this case). If $C=1$ then $\varphi(\lambda)=(0,0,-\omega_1\omega_2\la)$ and then
$\eta_1\Psi_1$ and $\eta_2\Psi_1\circ\sigma$ for some $|\eta_j|=1$ are left inverses.

Assume now that $C\in(0,1)$. Then $\psi$ is a holomorphic selfmapping of the unit disc. Rewriting the formula for $\varphi$ (composing $\psi$ with a M\"obius map) we get that
\begin{equation}\label{form1} \varphi(\la)=\left( \omega_1 \frac{(1-C) \psi(\la)}{1-C\psi(\la)},\omega_2 \la \frac{1-C}{1- C\psi(\la)},\omega_1 \omega_2 \la \frac{\psi(\la) - C}{1-C\psi(\la)}\right),\quad \la\in \DD,
\end{equation}
 where $C\in (0,1)$ and $\psi\in \mathcal O(\DD, \DD)$ is such that $\psi(0)=0.$
Composing $\varphi$ with a linear automorphism of $\EE$ we may assume that $\omega_1=\omega_2=1.$
Write $\psi(\la)=\la Z(\la),$ where
\begin{enumerate}
 \item either $Z$ is a holomorphic self mapping of the unit disc,
 \item or $Z(\lambda)=e^{i\theta}$, $\la\in \DD$, for some $\theta\in \RR.$

\end{enumerate}

Let us assume that the first possibility holds, i.e. $|\psi(\la)|<|\la|$, $\la \in \DD\setminus \{0\}$. We shall prove that in this case the left inverse is uniquely determined.

Let $F:\EE\to \DD$ be a left inverse of $\va$. Put
$$\tilde \va(\lambda,\nu):=\left(\frac{(1-C) \nu}{1-C \nu}, \la \frac{1-C}{1- C \nu}, \la \frac{\nu - C}{1-C \nu}\right),\quad \la,\nu\in \DD.
$$
Note that $\tilde\va(\DD^2)\subset\EE$.
Observe that $F\circ \tilde \va$ does not depend on the second variable. Actually, it suffices to note that $F\circ \tilde \va$ is a left inverse for
$\DD\owns \lambda\mapsto (\lambda, \psi(\lambda))\in\DD^2$ and make use of the uniqueness of left inverses in $\DD^2$
following from the non-uniqueness of geodesics (we may use Theorem~\ref{theorem:uniqueness} and remark after that theorem, too).
An immediate consequence of this observation is that $$F(\tilde \va(\lambda, \omega \lambda))=\lambda\quad \text{for any}\ \omega\in \overline\DD.$$

Let us write $\tilde \va=(\tilde \va_1, \tilde \va_2, \tilde \va_3).$ Note that
$$\tilde \va_1(\lambda, \omega\lambda)=\omega \tilde \va_2(\lambda, \omega\lambda),\quad \omega\in \overline \DD,\ \lambda\in \overline\DD.$$

Embedding $\GG_2$ into $\EE$
one may infer that for any $\omega\in\TT$ the mapping
\begin{equation}
 f_{\omega}:\DD\owns\la\mapsto (2\tilde \va_2(\la,\omega\la),\omega^{-1} \tilde \va_3(\la,\omega\la))\in\GG_2
\end{equation}
is a geodesic in the symmetrized bidisc intersecting its royal variety
$\mathcal T$ only at $(0,0).$

Making use of the results of Section~\ref{section:symmetrizedbidisc} (uniqueness of left inverses in the symmetrized bidisc - Theorem~\ref{theorem:symmetrizedbidisc-leftinverse}) we find that the (unique) left inverse of $f_{\omega}$ is given by the formula
\begin{equation}
 \GG_2\owns (s,p)\mapsto F(\omega s/2,s/2, \omega p).
\end{equation}\label{FG}
It follows from the description of left inverses in $\GG_2$ (see Remark~\ref{rem:leftG2}) that
\begin{equation}\label{FEQ} F(\omega s/2, s/2, \omega p)=b(\omega)\frac{2a(\omega)p - s}{2-a(\omega)s}\quad (s,p)\in \GG_2,\ \omega\in \TT,
\end{equation}
where $a(\omega),b(\omega)\in \TT$. Putting $p=0$ in \eqref{FEQ} we find that
\begin{equation}\label{fG} (s,\omega)\mapsto \frac{b(\omega)}{2-a(\omega) s}\end{equation} extends holomorphically to an open neighborhood of $\{0\}\times \overline{\DD}.$ Putting $s=0$ in \eqref{fG} we find that $b$ extends holomorphically to $\overline \DD.$ Differentiating \eqref{fG} with respect to $s$ and putting $s=0$ we get that $a^kb$ extends holomorphically to $\overline\DD$ for any $k\in \mathbb N.$ From this we easily deduce that $a$ extends holomorphically to $\overline\DD,$ as well.

Since $F(\omega\tilde \va_2(\la,\omega\la), \tilde \va_2(\la, \omega\la), \tilde \va_3(\la,\omega\la)) = \la$   for any $\la\in \DD$, $\omega\in\TT$
we find that $b$ may vanish only at $0$ (and then the multiplicity at $0$ is at most $1$). Therefore, either $b(w)=e^{i\theta} w$ or $b(w)=e^{i\theta}.$ Suppose that the first possibility holds. Then
$$F(\omega s/2,s/2, \omega p)= e^{i\theta} \frac{2a(\omega)\omega p - s\omega }{2-a(\omega)s},\quad (s,p)\in\GG_2,\omega\in\overline\DD.
$$
In particular, putting $s=0$ we find that $F(0,0,\oo p)=e^{i\theta} a(\oo) \oo p$ for $\oo,p\in \DD,$ whence $a$ is a unimodular constant and $$F(z)=e^{i\theta} \frac{ a z_3 - z_1}{1-a z_2},\quad z\in \EE.$$ Since $F(\tilde \va (\la,\nu))=\la,$ $\la,\nu \in \DD$, putting $\la=0$ and making use of the above equality we get a contradiction.

Therefore, $b=e^{i\theta}.$ Putting $s=0$ in \eqref{FEQ} we obtain the equality $$F(0,0,\omega p)=e^{i\theta} a(\omega) p,$$ which implies that $a(\omega) = e^{i\eta} \omega.$
This, together with the equality $F\circ \tilde \va(\cdot,\nu)=\id_{\DD}$ easily shows that $e^{i\eta}$ is uniquely defined which shows the uniqueness of $F$.

Finally, note that if the case (2) holds, i.e. $Z(\la)=e^{i\theta}$, $\la\in \DD,$ then $\varphi_1 (\la)=\omega \va_2(\la),$ $\la\in \DD$, for some $\omega\in \TT.$ Thus $\eta_1 \Phi_{\omega_1}$ and $\eta_2 \Phi_{\omega_2}\circ \sigma$ are the left inverses of $\va$, where $\eta_i,\omega_i\in \TT$ are appropriately chosen.

\subsection{Geodesiscs omiting $\Sigma$}

Let us consider the case when $\va:\DD\to \EE$ is a complex geodesic of $\EE$ such that $\va(\DD) \cap \Sigma = \emptyset$ and $\va(0)=(0,0,-\be^2)$ for some $\be\in (0,1).$ Then it follows from Theorem~\ref{geo-E} that there exist
$a,b,c,d\in\overline\DD$ with $|a|^2+|b|^2=|c|^2+|d|^2=1$ and
$a\overline c+b\overline d=0$  and there exists a holomorphic mapping $Z:\DD\to\DD$  such that $Z(\la)=\la$, $\la\in \DD,$ or $|Z(\la)|<|\la|,$ $\la\in \DD\setminus\{0\},$ such that $\va$ is of the form \eqref{formofva}.

If $Z(\la)=\la$, $\la \in \DD$, then $\varphi_1=\omega \varphi_2$ for some $\omega$ in the unit circle. One may check that $\eta_1 \Phi_{\omega_1}$ and $\eta_2 \Phi_{\omega_2}\circ \sigma$ are two different left inverses of $\va$, for some $\eta_i,\omega_i\in \TT$, $i=1,2$.

If $|Z(\la)|<|\la|,$ $\la\in \DD\setminus\{0\}$, then some elementary calculations show that the condition $\va(\DD)\cap \Sigma=\emptyset$ means that $|c||d|(1+\beta^2)\leq\beta$ (see Remark~\ref{rem:interS}). We shall consider two possibilities.

First we focus on the case $|c||d|(1+\beta^2)<\beta.$ In this case we show that left inverses are not uniquely determined.

Applying the results of Section~4 of \cite{Edi-Kos-Zwo2012} (precisely, keeping the notation from that paper, it is shown there that if $f$ given by formula (24) is a complex geodesic, then for some $\tau$ and $\gamma$ the function $g$ defined by formula (29) is a complex geodesic as well) we find that there is a M\"obius map such that one of the mappings $(\va_1, m\va_2, m \va_3)$ or $(m \va_1, \va_2, m\va_3)$ is a complex geodesic of $\EE$ intersecting $\Sigma$ only at a point $a\in \DD$ for which $m(a)=0$. Losing no generality we suppose that the first possibility holds. Denote $b:=\va_1(a)$ and let $\Psi$ be an automorphism of $\EE$ of the form (see \eqref{autE}) $$\Psi(z)=\Psi_{-b,0,1,1}(z)=\left( \frac{z_1 - b}{1- \bar b z_1},  \frac{z_2 - \bar b z_3}{1- \bar b z_1},  \frac{z_3 -\bar b z_2}{1- \bar b z_1}\right),\quad z\in \EE.$$ Note that $\Psi$ maps $(b,0,0)$ to $0$ and observe that $$\Psi\circ \tau_m = \tau_m \circ \Psi,$$ where $\tau_\la (z_1,z_2,z_3)=(z_1,\la z_2, \la z_3),$ $z\in \CC^3,$ $\la\in \CC.$ Therefore, composing, if necessary, the geodesic $\va$ and its left inverse with an automorphism of $\EE$ and with a M\"obius map we may always reduce the problem to the following one: $\va_1(0)=0$ and $\la\mapsto \tau_\la  (\va(\la))= ( \va_1,\la \va_2, \la \va_3)$ is a geodesic in $\EE.$ Now one can use the description of the geodesics of $\EE$ intersecting
$\Sigma$ given in \eqref{form1} to find a formula for $\la\mapsto (\va_1, \la\va_2, \la \va_3)$ which gives the following formula for $\va$
$$
\varphi(\la)=\left(\oo_1 \frac{(1-\be^2)\la}{1-\be^2 \la \tilde Z(\la)}, \oo_2 \frac{(1-\be^2) \tilde Z(\la)}{1-\be^2 \la \tilde Z(\la)}, \oo_1 \oo_2 \frac{\la \tilde Z(\la)-\be^2}{1- \be^2\la \tilde Z(\la)}\right), \quad \la\in \DD,
$$
where $\tilde Z:\DD\to \DD$ is a holomorphic function or $\tilde Z$ is a unimodular constant and $\oo_1, \oo_2\in \TT$. Of course, we may assume that $\oo_1=\oo_2=1$. Note that the case $\tilde Z(\la)=e^{i\theta}$, $\la\in \DD$, for some $\theta\in \RR$ is not possible. Similarly, the fact that $\tilde Z$ is a M\"obius maps is in a contradiction with the assumption $|Z(\la)|<|\la|,$ $\la\in \DD\setminus \{0\}$.

We already know that $\Phi_{1}$ is a left inverse for $\la\mapsto (\va_1, \la \va_2, \la \va_3)$, i.e. $\frac{\la \va_3 - \va_1}{\la \va_2 -1}=\la,$ $\la \in \DD.$ Recall that considering for
any $z\in \EE$ the equation $$\frac{\la z_3 - z_1}{\la z_2 -1}=\la,\quad \la\in \DD,$$ which has a unique solution, one may define a function $\tilde \Phi_1$ being a left inverse of $\va$ and which can be given explicitly
$$\tilde \Phi_1(z):=\frac{2 z_1}{1+z_3+\sqrt{(1 +z_3)^2 - 4z_1 z_2}}.$$

Our aim is to show that there are more left inverses for $\va.$ To do it we shall go to the Cartan symmetric domain of the second type and idea of the proof will rely upon constructing left inverses there. We present below the reasoning which led us to such a construction. So assume that $F:\EE\to \DD$ is holomorphic and such that $F\circ \va=\id.$

Repeating the argument used in Subsection~\ref{ss} involving uniqueness of left inverses in the bidisc we deduce that $$\la\mapsto \left(\frac{(1-\beta^2)\la}{1-\be^2 \la z}, \frac{(1-\beta^2)z}{1-\be^2 \la z}, \frac{\la z-\beta^2}{1-\be^2 \la z} \right)$$ is a complex geodesic in $\EE$ for any $z\in \DD$ and $F$ is its inverse. In particular,
$$ F\left(\frac{(1-\beta^2)\la}{1-\be^2 \la^2\omega^2}, \omega^2 \frac{(1-\beta^2)\la}{1-\be^2 \la^2\omega^2}, \frac{\omega^2 \la^2-\beta^2}{1-\be^2 \la^2 \omega^2}\right)=\la,\quad \la\in \DD, \omega\in \TT.$$ Therefore, $$f_{\omega}:\la\mapsto  \left(2 \frac{(1-\beta^2)\la}{1-\be^2 \la^2\omega^2}, \omega^{-2} \frac{\omega^2 \la^2-\beta^2}{1-\be^2 \la^2 \omega^2}\right)$$ is a complex geodesic in the symmetrized bidisc and $\GG_2\ni (s,p)\mapsto F(\frac s2, \omega^2 \frac s2, \omega^2 p)\in \DD$ is its left inverse, $\omega\in \TT.$ Observe that $f_{\omega} (\la)=\pi(a_{\omega}(\la), b_{\omega}(\la)),$ $\la\in \DD,$ where $a_{\omega}(\la)=\frac{\la -\be\bar\omega}{1-\la \be \omega}$ and $b_{\omega}(\la)=\frac{\la +\be\bar\omega}{1+\la \be \omega}$, $\la\in \DD$.

Making use of the description of holomorphic retracts in the bidisc (see Theorem~\ref{theorem:retracts}) we get the formula (note that $a^{-1}_{\omega} = b_\omega$):
\begin{multline}\label{eq:g-h} F\left(\frac{\lambda_1 +\lambda_2}{2}, \omega^2 \frac{\lambda_1+\lambda_2}{2}, \omega^2 \lambda_1\lambda_2\right)=\\ \frac{ta(\lambda_1) + (1-t) b(\lambda_2)- a(\lambda_1)b(\lambda_2) g(\lambda)}{1-((1-t)a(\lambda_1)+ t b(\lambda_2))g(\lambda)},\end{multline} where $t=t(\omega)\in[0,1]$, $a=a_{\omega}$, $b=b_{\omega}$ and $g=g_\omega$ lies in the closed unit ball of $H^{\infty}(\DD^2)$. We shall construct a left inverse of the above form with $t=1/2.$

Put $g_\omega= \omega^2 h_\omega$, $h=h_\oo$. After some simple calculations we get
\begin{multline}\label{startf} F\left(\frac{\lambda_1 +\lambda_2}{2}, \omega^2 \frac{\lambda_1+\lambda_2}{2}, \omega^2 \lambda_1\lambda_2\right)=\\ \frac{(1-\beta^2)\frac{\la_1 + \la_2}{2} - (\omega^2 \la_1 \la_2 + \beta \omega (\la_1 - \la_2) - \be^2)h}{1-\omega \beta (\la_1 - \la_2) - \be^2 \la_1 \la_2 \omega^2 - (1-\be^2)\frac{\la_1 +\la_2}{2}\omega^2h}.\end{multline}

Observe that for any $h\in \DD$ and $\omega\in \TT$ the mapping in the right side in the formula above lies in the closed unit ball of $H^{\infty}(\DD^2)$, as the mapping in the right side of \eqref{eq:g-h} lies in the closed unit ball of $H^{\infty}(\DD^2)$ for any $g\in \DD$ (see Theorem~\ref{theorem:retracts}). Moreover, the denominator $1-\omega \beta (\la_1 - \la_2) - \be^2 \la_1 \la_2 \omega^2 - (1-\be^2)\frac{\la_1 +\la_2}{2}\omega^2h$ never vanishes if $\lambda_1,\lambda_2\in \DD^2$, $h\in \DD$ and $\omega\in \TT$.  In particular,
\begin{equation}\label{domin}
\left| \frac{(1-\be^2)\frac{\la_1 +\la_2}{2}\omega^2}{1-\omega \beta (\la_1 - \la_2) - \be^2 \la_1 \la_2 \omega^2}\right|\leq 1
\end{equation} providing that $\la_1,\la_2\in \DD$ and $\omega\in \TT$. Note that it is also true for $\omega\in \DD$.
Actually, it suffices to replace $\lambda_i$ with $\omega \lambda_i$, $i=1,2,$

We shall need the following elementary observation:
\begin{lemma}\label{remark}
Let $$\tilde \pi:\CC^3\ni (\la_1, \la_2, \omega)\mapsto \left(\begin{array}{cc}
  (\la_1+\la_2)/2 & \omega(\la_1-\la_2)/2 \\
  \omega(\la_1 - \la_2)/2  & \omega^2 (\la_1+\la_2)/2 \\
\end{array}%
\right)\in \mathcal M_{2\times 2}(\CC).$$ 
A holomorphic function $f$ on $\mathcal R_{II}$ is bounded on $\EE$ if and only if $f\circ \tilde \pi$ is bounded on $\DD^3$. Moreover, the equality 
\begin{equation}\label{sup}\sup_{\mathcal R_{II}} |f|=\sup_{\DD^3} |f\circ \tilde \pi|
\end{equation}
holds.
\end{lemma}
\begin{proof}[Proof of Lemma~\ref{remark}]
First assume that the function $f$ holomorphic on $\mathcal R_{II}$ is additionally continuous on $\overline{\mathcal R_{II}}$.

Note that $\Pi(\tilde \pi(\la_1, \la_2, \omega))=\left((\la_1+\la_2)/2, \oo^2 (\la_1+\la_2)/2, \oo^2 \la_1\la_2\right),$ $\la_1,\la_2,\oo\in \CC$, where $\Pi$ is given by formula~\eqref{eq:coverE}, whence $$\tilde \pi(\DD^3)\subset \mathcal R_{II}.$$ Since $f$ attains its maximum on the Shilov boundary of $\mathcal R_{II}$
and the Shilov boundary of $\mathcal R_{II}$ is contained in $\tilde \pi(\TT^3)$ (for a description of the Shilov boundary of $\EE$ see\cite{You2008} or \cite{Kos2011})
we easily deduce that~\eqref{sup} holds for $f$.

If $f$ is not continuous, to get the assertion apply the previous case to dilatations of the function $f$.
\end{proof}

For $(\la_1,\la_2,\omega)\in \DD^3$ we put $z_1:=(\la_1 +\la_2)/2$, $z_2:=\omega^2(\la_1+\la_2)/2$ and $z_3:=\omega^2 \la_1 \la_2.$ As already mentioned, $z=(z_1,z_2, z_3)\in \EE.$ Moreover, $4(z_1z_2-z_3)=\omega^2 (\la_1-\la_2)^2.$ This and formula for $F$ suggest that $F$ should be considered as the mapping on $\mathcal R_{II}.$

To do so let  us define
\begin{multline}\label{Gh}G_h\left(\begin{array}{cc}
  z_1 & a \\
  a  & z_2 \\
\end{array}%
\right):=\frac{(1-\beta^2)z_1 - (z_3 + 2\be a - \be^2)h }{1-2 a\beta - \be^2 z_3 - (1-\beta^2) z_2 h} =\\ \frac{(1-\beta^2)z_1 - (z_1z_2-(a-\beta)^2)h }{(1-a\beta)^2 - \beta^2 z_1 z_2- (1-\beta^2) z_2 h},\end{multline} where $z_3=z_1z_2 - a^2.$

First observe that if $h$ lies in the closed unit ball of $H^{\infty}(\mathcal R_{II})$ then $G_h$ is well defined on $\mathcal R_{II}$. Actually, it is enough to show that the denominator appearing in formula~\eqref{Gh} never vanishes on $\mathcal R_{II}$. To observe it note that $(1-a\beta)^2 - \beta^2 z_1 z_2 = \det (1- \beta \left(\begin{array}{cc}
  a & z_1 \\
  z_2  & a \\
\end{array}%
\right) )$ so this term does not vanish on $\mathcal R_{II}$, as the spectral norm of the matrix $\left(\begin{array}{cc}
  a & z_1 \\
  z_2  & a \\
\end{array}%
\right)$ is less than $1$ on $\mathcal R_{II}$. Then it suffices to apply Lemma~\ref{remark} to $$ \left(\begin{array}{cc}
  z_1 & a \\
  a  & z_2 \\
\end{array}%
\right)\mapsto \frac{ (1-\beta^2) z_2 }{(1-a\beta)^2 - \beta^2 z_1 z_2}=  \frac{ (1-\beta^2) z_2 }{1- 2 a\beta - \beta^2 z_3}$$ and make use of the inequality~\eqref{domin} to show that the absolute value of the above function is less than $1$ on $\mathcal R_{II}$.

Moreover, applying Lemma~\ref{remark} once again we find that $G_h$ lies in the closed unit ball of $H^{\infty}(\mathcal R_{II})$.

Next note that
\begin{equation}\label{Gh1} G_h\left(\begin{array}{cc}
  (1-\beta^2)\la & \beta \\
  \beta  & 0 \\
\end{array}%
\right)=\la,\quad \la \in \DD,\end{equation} for any $h$ in the closed unit ball of $H^{\infty}(\mathcal R_{II})$.

Finally, some simple computations show that if $h$ is the left inverse for $\la\mapsto \left(\begin{array}{cc}
  (1-\beta^2)\la & -\beta \\
  -\beta  & 0 \\
\end{array}%
\right)$ in $\mathcal R_{II}$, then $G_h$ is its left inverse, as well, i.e.:
\begin{equation}\label{Gh2} G_h\left(\begin{array}{cc}
  (1-\beta^2)\la & -\beta \\
  -\beta  & 0 \\
\end{array}%
\right)=\la,\quad \la \in \DD.\end{equation}

Then for such a left inverse $h$ the following function \begin{multline}\nonumber F_h:\EE\ni (z_1,z_2,z_3)\mapsto  \frac 12 G_h \left(\begin{array}{cc}
  z_1 & \sqrt{z_1 z_2 -z_3^2} \\
  \sqrt{z_1 z_2-z_3^2}  & z_2 \\
\end{array}%
\right)+\\ +\frac 12 G_{h}\left(\begin{array}{cc}
  z_1 & -\sqrt{z_1 z_2 -z_3^2} \\
  -\sqrt{z_1 z_2-z_3^2}  & z_2 \\
\end{array}%
\right)\in \DD\end{multline} is the left inverse for $\la \mapsto ((1-\beta^2)\la, 0, -\beta^2)$ in the tetrablock. Repeating the argument with uniqueness of left inverses in the bidisc we easily deduce that $F_h$ is the left inverse for the geodesic $\va(\la)=
\left(\frac{(1-\be^2)\la}{1-\be^2 \la \tilde Z(\la)}, \frac{(1-\be^2) \tilde Z(\la)}{1-\be^2 \la \tilde Z(\la)}, \frac{\la \tilde Z(\la)-\be^2}{1- \be^2\la \tilde Z(\la)}\right),$ $\la\in \DD$.

So what remains to do is to construct a left inverse $h$ of  $\la\mapsto \left(\begin{array}{cc}
  (1-\beta^2)\la & -\beta \\
  -\beta  & 0 \\
\end{array}%
\right)$ in $\mathcal R_{II}$ such that
\begin{equation}\label{nr}F_h\not\equiv \tilde \Phi_1.
 \end{equation}
We need to introduce some additional notation. Recall that for any $a\in \mathcal R_{II}$ the mapping $$\varphi_a(x)=(1-aa^*)^{-\frac{1}{2}}(x-a)(1-a^*x)^{-1}(1-a^*a)^{\frac{1}{2}},\quad x\in\mathcal R_{II}$$ is an automorphism of $\mathcal R_{II},$ $\varphi_a(0)=-a$ and $\varphi_a(a)=0.$  Let us denote $\varphi_\be:=\varphi_{B(\be)}$, where $B(\be)= \left(\begin{array}{cc}
  0 & \beta \\
  \beta & 0 \\
\end{array}%
\right)$ and $-1<\beta<1.$

To prove \eqref{nr} note that
\begin{equation}\label{Phila}
\tilde \Phi_1(\la, 0, 0)=\la,\quad \la \in \DD.
\end{equation}
Next, observe that $$\tilde h:\mathcal R_{II}\ni \left(\begin{array}{cc}
  z_1 & a \\
  a & z_2 \\
\end{array}%
\right)\mapsto z_1\in \DD$$ is a left inverse of $\DD\ni\la\mapsto \left(\begin{array}{cc}
  \la & 0 \\
  0 & 0 \\
\end{array}%
\right)\in \mathcal R_{II}.$ Therefore, $$h:=\tilde h\circ \varphi_{-\beta}$$ is a left inverse of $\la\mapsto \left(\begin{array}{cc}
  (1-\beta^2)\la & -\be \\
  -\beta & 0 \\
\end{array}%
\right)$ in $\mathcal R_{II}$.
Thus \begin{equation}
h\left(\begin{array}{cc}
  \la & 0 \\
  0 & 0 \\
\end{array}%
\right)=
\tilde h\left(\begin{array}{cc}
  (1-\beta^2)\la & \be \\
  \be & 0 \\
\end{array}%
\right)=
(1-\be^2) \la, \quad \la\in \DD,
\end{equation} whence $F_h(\la,0,0)=G_h\left(\begin{array}{cc}
  \la & 0 \\
  0 & 0 \\
\end{array}%
\right) = \la(1-\be^4)\neq \la= \tilde \Phi_1(\la, 0, 0)$ (by \eqref{Phila}) so actually, $\Phi_h$ gives a new left inverse.

We are left with the case when $\va$ is a complex geodesic of the form~\eqref{formofva},
where
$|c||d|(1+\be^2) = \be$ and $|Z(\la)|<|\la|$, $\la\in \DD\setminus \{0\}$.

Let $F$ be a left inverse of $\va$. We shall show that $F$ is uniquely determined.

As before, one can show that that for any $Z\in \DD$
$$
F\left(
\frac{A(\lambda, Z)(1-\beta^2)}{\Delta(\lambda, Z)},
\frac{C(\lambda, Z)(1-\beta^2)}{\Delta(\lambda, Z)},
\frac{A(\lambda, Z)C(\lambda, Z)-(B(\lambda, Z)+\beta)^2}{\Delta(\lambda, Z)}
\right)=\la,
$$
$\la\in \DD$,
where $A(\la, Z) = a^2 \la + b^2 Z$, $B(\la, Z) = ac \la + bd Z$, $C(\la, Z) = c^2 \la + d^2 Z$ and $\Delta(\la, Z)= (1 + \beta B(\la, Z))^2 - A(\la, Z) C(\la, Z) \beta^2$, $\la, Z\in \DD$.

Note that either $|a|<|b|$ or $|b|<|a|.$ If the first possibility holds we
take $Z=\omega \la,$ where $\omega\in \TT$, and we get that
$$\varphi_\oo: \la\mapsto \left( \frac{2A(\lambda, \la \oo)(1-\beta^2)}{\Delta(\lambda, \la \oo)}, m(\omega)\frac{A(\lambda, \la \oo)C(\lambda, \la \oo)-(B(\lambda, \la \oo)+\beta)^2}{\Delta(\lambda, \la \oo)}
\right),$$
where $m(\omega)=\frac{A(\la, \la \oo)}{C(\la, \la \oo)}$, is a complex geodesic in $\GG_2$ and $$(s,p)\mapsto F\left(\frac{s}{2}, m(\oo) \frac s2,  m(\oo)p\right)$$ is its left inverse, $\oo\in \TT$. In the case $|b|<|a|$ instead of $\varphi_\oo$ one may consider the mapping $$\psi_\oo: \la\mapsto \left( \frac{2C(\lambda, \la \oo)(1-\beta^2)}{\Delta(\lambda, \la \oo)}, n(\omega) \frac{A(\lambda, \la \oo)C(\lambda, \la \oo)-(B(\lambda, \la \oo)+\beta)^2}{\Delta(\lambda, \la \oo)}
\right),$$
where $n(\omega)=\frac{C(\la, \la \oo)}{A(\la, \la \oo)}$. We restrict ourselves to the case $|a|<|b|,$ as for $|b|<|a|$ the proof is the same (with $\psi_\oo$ instead of $\phi_\oo$).

Replacing $\la$ with $\eta_1 \la,$ and $Z$ with $\eta_2 Z,$ where $\eta_1,\eta_2\in \TT$ and using the fact that the tetrablock is $(1,-1,0)$- balanced, we may additionally assume that $a\in \RR_{\geq 0},$ $b=c=\sqrt{1-a^2}$ and $d=-a.$ Then $a^2=\frac{\be^2}{1+\be^2}$, $b^2=\frac{1}{1+\be^2}$ and $m(\oo)= \frac{\oo +\be^2}{1+\be^2 \oo},$ $\oo\in \TT.$ Moreover, $A(\la,\la\omega)=\frac{\la(\beta^2+\omega)}{1+\beta^2}$,
$B(\la,\la\omega)=\frac{\la\beta(1-\omega)}{1+\beta^2}$, $C(\la,\la\omega)=\frac{\la(1+\omega\beta^2)}{1+\beta^2}$, $\triangle(\la,\la\omega)=1+\la\frac{2\beta^2(1-\omega)}{1+\beta^2}-\la^2\beta^2\omega$.
Consequently,
\begin{equation}
 (\va_{\omega}(\la))_1=\frac{2\la \frac{(\beta^2+\omega)(1-\beta^2)}{1+\beta^2}}{1+\la\frac{2\beta^2(1-\omega)}{1+\beta^2}-\la^2\beta^2\omega}, \quad \la\in \DD,
\end{equation}
and
\begin{equation}
(\va_{\omega}(\la))_2=m(\oo) \frac{\la ^2 \oo - \la \frac{2\beta^2(1-\oo)}{1+\beta^2} - \beta^2}{ 1 + \la \frac{2\beta^2 (1-\oo)}{1+ \beta^2} - \la^2 \beta^2 \oo},\quad \la \in \DD.
\end{equation}

Using the description of geodesics in the symmetrized bidisc omitting its royal variety (compare Theorem~\ref{theorem:symmetrizedbidisc-geodesics})
we find that there are M\"obius maps $c_\oo$ and $d_\oo$ such that $\va_\oo =\pi (c_\oo, d_\oo),$ $\oo\in \TT$. In particular, making use of the description of holomorphic retracts in the bidisc we deduce that
\begin{multline}\label{Fs} F\left(\frac{\la_1 +\la_2 }{2}, \frac{1}{m(\oo)} \frac{\la_1+\la_2}{2}, \frac{1}{m(\oo)}\la_1 \la_2 \right)=\\ \frac{t_\oo c^{-1}_\oo(\la_1) + (1-t_\oo) d^{-1}_\oo(\la_2) - c^{-1}_\oo (\la_1) d^{-1}_\oo (\la_2) h_\oo (\la_1, \la_2)}{1-((1-t_\oo) c^{-1}_\oo (\la_1)+ t_{\omega} d^{-1}_\oo (\la_2))h_{\oo}(\la_1, \la_2)},
\end{multline}
 $\la_1,\la_2\in \DD,$ $\oo\in \TT,$ for some $h_\oo\in H^{\infty}(\DD^2)$ such that $||h_\oo||\leq 1$ and $t_\oo\in [0,1]$.

The first crucial observation is that $\pi(c_{-1}, d_{-1})$ is a complex geodesic in $\GG_2$ having one left inverse. More precisely, the equation $c_{-1}=d_{-1}$ has only one solution (which lies in the unit circle) and then we apply Theorem~\ref{theorem:symmetrizedbidisc-leftinverse}.
To prove it let us observe that $$\va_{-1}(\la)=\left(\frac{-2\frac{(1-\be^2)^2}{1+\be^2} \la}{1+\frac{4\be^2}{1+\be^2} \la +\be^2\la^2}, \frac{\la^2+\frac{4\be^2}{1+\be^2} \la +\be^2}{1+\frac{4\be^2}{1+\be^2} \la +\be^2\la^2}\right),\quad \la\in \DD.$$
Let $a:=\frac{2\beta^2}{1+\beta^2}-i\beta\frac{1-\beta^2}{1+\beta^2}\in \DD$ be a solution of the equation $\la^2-\frac{4\be^2}{1+\be^2} \la +\be^2$. Then it is easy to check that $$c_{-1}(\la)=i\frac{|a|}{a}\frac{\la+a}{1+\bar a \la}\quad \text{and}\quad d_{-1}(\la)=-i\frac{a}{|a|}\frac{\la+\bar a}{1+ a \la},\quad \la\in \DD.$$ A direct computation allows us to observe that the equation $c_{-1}(\la)=d_{-1}(\la)$ has one double root $\la=-1$. So actually, the left inverse of $\pi(c_{-1}, d_{-1})$ is uniquely defined. Moreover, it follows from Lemma~\ref{remg2} that
\begin{equation}\label{t-1}t_{-1}=\frac12\end{equation}
and $h_{-1}$ is a unimodular constant.
From Lemma~\ref{remg2} we get that $h_{-1}=-1.$ Putting in \eqref{Fs} $\la_1=\la_2=0$ we find that
\begin{equation}\label{valF}F(0)=-\be^2.
\end{equation}

Differentiating \eqref{Fs} at the point $(c_\oo(0), d_\oo(0))$ with respect to $\la_1$ and $\la_2$ respectively we get that
\begin{multline*}t_\oo (c_\oo^{-1})' (c_\oo(0))= \frac 12 \frac{\partial F}{\partial x_1}(0,0,-\be^2)+\\ \frac{1}{m(\oo)} \frac 12 \frac{\partial F}{\partial x_2}(0,0,-\be^2)+\frac{1}{m(\oo)} d_\oo(0)\frac 12  \frac{\partial F}{\partial x_3}(0,0,-\be^2)
\end{multline*}
and
\begin{multline*}(1-t_\oo) (d_\oo^{-1})' (d_\oo(0))= \frac 12 \frac{\partial F}{\partial x_1}(0,0,-\be^2)+\\ \frac{1}{m(\oo)} \frac 12 \frac{\partial F}{\partial x_2}(0,0,-\be^2)+ \frac{1}{m(\oo)} c_\oo(0)\frac 12  \frac{\partial F}{\partial x_3}(0,0,-\be^2).
\end{multline*}
As $c_\oo(0)+d_\oo(0)=0$
adding the above equalities we find that
\begin{equation}\label{dod} \frac{t_\oo d_\oo'(0) +(1-t_\oo) c_\oo'(0)}{c_\oo'(0) d_\oo'(0) } =  \frac{\partial F}{\partial x_1}(0,0,-\be^2)+\frac{1}{m(\oo)} \frac{\partial F}{\partial x_2}(0,0,-\be^2).
\end{equation}

Differentiating the components of the equality $\pi(c_\oo, d_\oo)=\va_\oo$ we get that
$$c_\oo'(0)+d_\oo'(0)=\frac{2(1-\be^2)}{1+\be^2}(\be^2+\oo)$$
 and (recall $c_\oo (0) + d_\oo (0)=0$ and $c_\oo (0) d_\oo (0)=-m(\oo) \be^2,$ $\oo\in \TT$)
$$c_\oo'(0)- d_\oo'(0) = \pm \frac{2\be}{1+\be^2} (1-\oo) \sqrt{m(\oo)} (1-\be^2),$$ for some branch of the square root $\sqrt{m(\oo)}$.
 The equalities above simply imply that
$$c_\oo'(0) d_\oo'(0) = \left(\frac{1-\be^2}{1+\be^2}\right)\left( (\be^2+\oo)^2 - \be^2 (1-\oo)^2 m(\oo)\right).$$

Let us denote $A:=\frac{\partial F}{\partial x_1}(0,0,-\be^2),$  $B:=\frac{\partial F}{\partial x_2}(0,0,-\be^2)$. Rewriting \eqref{dod} we get that
$$\frac{(t_\oo-\frac12)(d_\oo'(0) - c_\oo'(0))+\frac12(c_\oo'(0)+d_\oo'(0))}{c_\oo'(0)d_\oo'(0)} = A+\frac{1}{m(\oo)} B.
$$
Some simple computations lead to
\begin{multline}\label{form}
 \pm 2(t_\oo -\frac12) \sqrt{m(\oo)}  \frac{1-\be^2}{1+\be^2} \be (1-\oo) = -\frac{1-\be^2}{1+\be^2}(\be^2 +\oo)+\\ \left(\frac{1-\be^2}{1+\be^2} \right)^2 \left( (\be^2+\oo)^2 - \be^2 m(\oo) (1-\oo)^2 \right) \left( A+\frac{1}{m(\oo)} B\right).
\end{multline}
Putting here $\oo=-1$ and making use of \eqref{t-1} we find that $$A-B=\frac{-1}{1+\be^2}.$$ On the other hand putting in \eqref{form} $\oo=1$ we get $$A+B=\frac{1}{1-\be^2}.$$

Looking at \eqref{form} carefully we see that there are holomorphic mappings $\va$ and $\psi$ in a neighborhood of $\overline\DD$, $\psi(-\beta^2)\neq 0$ (which may be given explicitly) such that $$\pm (t_\oo-\frac12) \sqrt{m(\oo)}= \va(\oo) m(\oo) + \psi(\oo),\quad \oo\in \TT.$$
Raising the equation above to the second power we get
\begin{equation}\label{pow}(t_\oo-\frac12)^2 m(\oo)= (\va(\oo) m(\oo) + \psi(\oo))^2,\quad \oo\in \TT.
\end{equation}
Using the fact that $t_\oo$ attains only real values we easily find that there are $\alpha'\in \CC$ and $\alpha''\in \RR$ such that $(t_\oo-\frac{1}{2})^2=\overline{\alpha'}m(\oo)^{-1}+ \alpha'' + \alpha' m(\oo)$ (this observation appears in the literature as the Gentili's lemma, see \cite{Gen 1987}). Since $t_{-1}=1/2$ we get that $\alpha''=2\Re \alpha'.$ Comparing multiplicities of roots we deduce that $\alpha'\in \RR$ and $\alpha''=2\alpha'.$

Concluding, we have shown the existence of $\alpha$ such that $\pm(t_\oo - \frac12) \sqrt{m(\oo)} = \alpha (m(\oo)+1),$ $\oo\in \TT.$
Note that $\alpha=\psi(-\be^2)$ and the last value is equal to $\frac{-\be}{2(1+\be^2)}.$

Let $\ga_\oo$ and $\de_\oo$ be that $c_\oo (\ga_\oo)=d_\oo (\de_\oo)=0$. Our aim is to make use of the formula \eqref{Fs} with $\la_1=\la_2=0.$ Note that it may be rewritten as (remember about \eqref{valF}):
\begin{equation}\label{Fs1}
 \frac{\frac12\left(\de_\oo^{-1} + \gamma_\oo^{-1}\right) + (t_\oo-\frac12) \left(\de_\oo^{-1} - \gamma_\oo^{-1} \right) - h_\oo(0)}{\ga_\oo^{-1}\de_\oo^{-1} - \left(\frac12\left(\de_\oo^{-1} + \gamma_\oo^{-1}\right) - (t_\oo-\frac12) \left(\de_\oo^{-1} - \gamma_\oo^{-1}\right)     \right) h_\oo(0)}= -\be^2.
\end{equation}

Recall that
$\Delta(\la, \oo\la)=1 +2 \frac{\be^2}{1+\be^2} (1-\oo) \la - \la^2 \be^2 \oo.$ Certainly, $\frac{1}{\bar \ga_\oo}$ and $\frac{1}{\bar \de_\oo}$ solve the equation $$\Delta (\la, \oo \la)=0.$$
 From this we easily find that $$\left(t_{\omega}-\frac12\right)^2 (\ga_\oo^{-1} - \de_\oo^{-1})^2= \left(\frac{1+\oo}{1+\be^2}\right)^2.$$
Therefore $\left(t_{\omega}-\frac12\right) (\ga_\oo^{-1} - \de_\oo^{-1})=s_\oo,$ where either $s_\oo= \frac{1+\oo}{1+\be^2}$ or $s_\oo= -\frac{1+\oo}{1+\be^2}$. Putting it to \eqref{Fs1} we find that
$$h_\oo(0) \left(1+\be^2\frac{\oo -1}{1+\be^2} - \be^2 s_\oo\right) = \frac{\oo-1}{1+\be^2}-\oo + s_\oo.$$ If $s_\oo= -\frac{1+\oo}{1+\be^2}$ we get that $|h_\oo(0)|>1,$ $\oo\in \TT\setminus\{-1\}$, which is impossible. Therefore $s_\oo=\frac{1+\oo}{1+\be^2}$ and $h_\oo(0)=\oo,$ $\oo\in \TT$. The maximum principle implies that that $h_\oo$ is constant, $\oo\in \TT$, and that $t_\oo$ is also uniquely determined.

Thus $F$ is uniquely defined on $\{((\la_1 + \la_2)/2, (\la_1 + \la_2)/(2m(\oo)), \la_1 \la_2 /m(\oo)):\ \la_1, \la_2\in \DD, \oo\in \TT\}$. Thus the assertion follows from the identity principle.

\bibliographystyle{amsplain}

\end{document}